\documentclass[12pt]{amsart}
\usepackage{bbm}
\usepackage{graphicx}
\usepackage{mathrsfs}
\usepackage{amsmath, amssymb, amsthm, latexsym}
\usepackage{amssymb,amscd,amsmath}
\usepackage{latexsym}
\usepackage{MnSymbol}
\usepackage{enumitem}
\usepackage[center]{caption}
\usepackage{tikz}
\usepackage[all]{xy}
\usepackage{amsxtra,mathtools,stmaryrd}
\newcounter{braid}
\newcounter{strands}

\DeclareMathAlphabet{\bsf}{OT1}{cmss}{bx}{n}

\pgfkeyssetvalue{/tikz/braid height}{1cm}
\pgfkeyssetvalue{/tikz/braid width}{1cm}
\pgfkeyssetvalue{/tikz/braid start}{(0,0)}
\pgfkeyssetvalue{/tikz/braid colour}{black}
\pgfkeys{/tikz/strands/.code={\setcounter{strands}{#1}}}

\makeatletter
\def\cross{%
  \@ifnextchar^{\message{Got sup}\cross@sup}{\cross@sub}}

\def\cross@sup^#1_#2{\render@cross{#2}{#1}}

\def\cross@sub_#1{\@ifnextchar^{\cross@@sub{#1}}{\render@cross{#1}{1}}}

\def\cross@@sub#1^#2{\render@cross{#1}{#2}}

\def\render@cross#1#2{
  \def\strand{#1}
  \def\crossing{#2}
  \pgfmathsetmacro{\cross@y}{-\value{braid}*\braid@h}
  \pgfmathtruncatemacro{\nextstrand}{#1+1}
  \foreach \thread in {1,...,\value{strands}}
  {
    \pgfmathsetmacro{\strand@x}{\thread * \braid@w}
    \ifnum\thread=\strand
    \pgfmathsetmacro{\over@x}{\strand * \braid@w + .5*(1 - \crossing) * \braid@w}
    \pgfmathsetmacro{\under@x}{\strand * \braid@w + .5*(1 + \crossing) * \braid@w}
    \draw[braid] \pgfkeysvalueof{/tikz/braid start} +(\under@x pt,\cross@y pt) to[out=-90,in=90] +(\over@x pt,\cross@y pt -\braid@h);
    \draw[braid] \pgfkeysvalueof{/tikz/braid start} +(\over@x pt,\cross@y pt) to[out=-90,in=90] +(\under@x pt,\cross@y pt -\braid@h);
    \else
    \ifnum\thread=\nextstrand
    \else
     \draw[braid] \pgfkeysvalueof{/tikz/braid start} ++(\strand@x pt,\cross@y pt) -- ++(0,-\braid@h);
    \fi
   \fi
  }
  \stepcounter{braid}
}

\tikzset{braid/.style={double=\pgfkeysvalueof{/tikz/braid colour},double distance=1pt,line width=2pt,white}}

\newcommand{\braid}[2][]{%
  \begingroup
  \pgfkeys{/tikz/strands=2}
  \tikzset{#1}
  \pgfkeysgetvalue{/tikz/braid width}{\braid@w}
  \pgfkeysgetvalue{/tikz/braid height}{\braid@h}
  \setcounter{braid}{0}
  \let\sigma=\cross
  #2
  \endgroup
}
\makeatother

\input xypic
\newtheorem{theorem}{Theorem}
\newtheorem{proposition}[theorem]{Proposition}

\newtheorem{lemma}[theorem]{Lemma}

\makeatletter
\makeatother

\def\Z{\mathbb{Z}}

\def\C{\mathbb{C}}

\def\R{\mathbb{R}}
\def\C{\mathbb{C}}

\def\N{\mathbb{N}}

\def\qed{\hfill$\square$\medskip}

\def\Zpk{\mathbb{Z}/p^{k}}
\def\Zpk1{\mathbb{Z}/p^{k-1}}

\newcommand{\rref}[1]{(\ref{#1})}

\newcommand{\beg}[2]{\begin{equation}\label{#1}#2\end{equation}}
\def\r{\rightarrow}

\def\sl2{\widetilde{SL_{2}(\Z)}}

\author{P.Hu, I.Kriz and Y.Lu}
\title[$\Sigma_3$-cobordism]{Coefficients of the $\Sigma_3$-equivariant
complex cobordism ring}
\thanks{Kriz acknowledges the support of a Simons Collaboration Grant.}


\begin{document}
\maketitle

\begin{abstract}
In this paper, we calculate the coefficient ring of equivariant Thom complex cobordism for the symmetric
group on three elements. We also make some remarks on general methods of calculating certain pullbacks
of rings which typically occur in calculations of equivariant cobordism. 
\end{abstract}

\section{Introduction}

Calculations of equivariant cobordism rings of a finite or compact Lie group $G$ are a current frontier
of equivariant homotopy theory. The cobordism rings we have in mind here are homotopy rings of the corresponding
equivariant Thom spectra, as defined in \cite{lms}. Because of transversality issues, it has been known
since the 60's that these homotopical cobordism rings are different from geometric cobordism rings,
which are actual cobordism rings of weakly stably complex $G$-manifolds. While geometric complex cobordism
rings are of geometric interest, homotopy cobordism rings are of 
much more fundamental homotopy-theoretical interest. The reason is that the equivariant Thom spectra $MU_G$,
which represent them, are proving to play the same key role as complex cobordism plays in 
non-equivariant homotopy theory. Non-equivariantly, it has been known since the 60's by results of Milnor,
Novikov and Quillen that complex cobordism is, in a very strong sense, a universal complex-oriented spectrum,
its associated formal group law being Lazard's universal formal group law. 
Later, methods were developed for constructing a menagerie of complex-oriented spectra from complex
cobordism, and much of the research in stable homotopy theory even today is the study of spectra constructed
by other methods (notably homotopy limits) out of these ingredients.

\vspace{3mm}
Equivariantly, an analogous picture has been much slower to emerge, yet is now clearly coming
into focus. For abelian groups $G$, much progress has been made in calculating the homotopy cobordism
rings $(MU_G)_*$ (\cite{kriz,sinha, ak, strickland, hu}). A theory of $G$-equivariant formal group
laws for $G$ abelian has been constructed \cite{cgk}, and further developed in \cite{cgk1, strickmulti,hu, hks}.
Universality of the equivariant formal group law on $(MU_G)_*$ has been conjectured in \cite{cgk1}, and
recently proved for $G=\Z/2$ by Hanke and Wiemeler \cite{hw}. A program of constructing 
complex oriented $G$-equivariant
spectra out of their $G$-equivariant formal group laws was begun in \cite{hks}. The reader should
realize that these are major developments, as equivariant spectra are much less understood than their 
non-equivariant counterparts, and, in any event, both equivariantly and non-equivariantly, constructing
new spectra (i.e. generalized cohomology theory) ``artificially" out of algebraic data is a major milestone
of homotopy theory.

\vspace{3mm}
While the equivariant picture of complex-oriented spectra is beginning to clear up for $G$ abelian, 
almost nothing remains known for $G$ non-abelian. This is related to the fact that almost no calculations
of equivariant cohomology groups (with the exception of K-theory) are known in the non-abelian case. 
Some recent progress has been made in calculating the $RO(G)$-graded coefficients of 
ordinary cohomology for dihedral groups \cite{kl} and for the quaternion group $Q_8$
\cite{lu}. For cobordism, no calculations were previously known,
although a remarkable completion theorem was proved by Greenlees and May \cite{gm}. Accordingly,
no theory of $G$-equivariant formal group laws has been developed to date for $G$ non-abelian.

\vspace{3mm}
The purpose of this paper is to calculate the equivariant cobordism ring $(MU_{\Sigma_3})_*$ of
the symmetric group on three elements. If we believe that the conjecture of \cite{cgk1} will extend
to $G$ non-abelian, we could therefore now {\em define} $\Sigma_3$-equivariant formal group laws as
represented by the ring $(MU_{\Sigma_3})_*$, and try to use this to study their algebraic properties,
and perhaps, hopefully, eventually obtain a better definition.

\vspace{3mm}
The method used to calculate $(MU_{\Sigma_3})_*$ is the most successful method for calculating 
equivariant generalized cohomology so far, namely the method of {\em isotropy separation}. One uses the properties
of the orbit category of $G$ to investigate certain spectra related to an equivariant spectrumm $E$, which 
can be considered its ``building blocks". Usually, the ``geometric fixed point spectrum" $\Phi^GE$ is one
of these building blocks, and the Borel cohomology spectrum $F(EG_+,E)$ is another. Unless $G$
is elementary cyclic, however, more building blocks are needed to connect the dots. 

\vspace{3mm}
In the case of complex cobordism, the geometric fixed points were calculated by tom Dieck \cite{td}.
We begin in Section \ref{s1} below by calculating the $MU$-Borel cohomology for $\Sigma_3$.
Next, it turns out advantageous to calculate the coefficients of $S^{\infty\alpha}\wedge MU_{\Sigma_3}$
where $\alpha$ is the sign representation of $\Sigma_3$. We do this in Section \ref{s2}.
Next, we calculate the ``intermediate Borel cohomology" $F(S(\infty\alpha)_+,MU_{\Sigma_3})$ in
Section \ref{s3}. This is harder, and dependent on recent explicit algebraic computations of $(MU_{\Z/p})_*$
in \cite{hu}, generalizing a previous result of Strickland \cite{strickland} for $p=2$.
Finally, in Section \ref{s4}, we put this together and calculate $(MU_{\Sigma_3})_*$.
In Section \ref{s5}, we make this still more explicit by making certain algebraic observations,
a byproduct of which is, among other things, a geometric interpretation of the rings $(MU_{\Z/p})_*$ for
$p$ prime.

\section{Borel cohomology}\label{s1}

We first compute the coefficients of $\Sigma_3$-equivariant $MU$-Borel cohomology, which is equivalent to 
$MU^*B\Sigma_3$. We have the diagram
\beg{egamma}{\diagram
\Z/3\dto_\lhd\rto_\subset^\nu
& S^1\dto_\lambda\rto^\iota &S^1\times S^1\dto^\mu\\
\Sigma_3\rto^{\subset}& O(2)\rto^\kappa & U(2)
\enddiagram
}
where $\gamma_\R$ is the real $2$-dimensional irreducible representation, $\kappa$ is complexification,
$\lambda$ is the inclusion of a maximal torus, $\iota(z)=(z,z^{-1})$, and $\mu$ is inclusion of the maximal torus.
Passing to $MU$-cohomology of classifying spaces, it is well known that 
$$MU^*(B(S^1\times S^1))=MU_*[[u_+,u_-]]$$
where $u_+,u_-\in MU^2(\C P^\infty)$ are the Euler classes of the two factors. In these terms, $\mu^*$
is injective, and its image consists of
$$u_\alpha=u_++_Fu_-,\; u=u_+u_-.$$
One can of course instead of $u_\alpha$ also use $u_++u_-$, but the advantage of this notation is that
$u=u_\gamma$ can be considered as the Euler class of the identity representation $\gamma$,
while $u_\alpha$ is the Euler class of the determinant representation $\alpha$.

Wilson \cite{wilson} proved that $\kappa^*$ is onto, and in fact
$$MU^*(BO(2))=MU^*[[u_\alpha, u_\gamma]]/([2]u_\alpha, u_\gamma-\widetilde{u}_\gamma).$$
In the image of $\mu^*$, we have 
$$\widetilde{u}_\gamma=i(u_+)i(u_-)$$
where $i$ denotes the formal inverse. It is worth noting that the class $u_\gamma-\widetilde{u}_\gamma$
is divisible by $u_\alpha u_\gamma$ in $MU^*BU(2)$. 
This can be verified topologically because it restricts trivially to $S^1$ via $\lambda$,
and also directly algebraically, since it is equal in $MU_*[[u_+,u_-]]$ to
$$u_+(u_--i(u_+))+(u_+-i(u_-))i(u_+).$$
On the other hand, it is important to know that despite the notation, $\{3\}u_\gamma$ is a power series in
both $u_\alpha$, $u_\gamma$.
When restricting to $\Sigma_3$, we additionally have the relation
\beg{ethree}{0=\{3\}u_\gamma:=[2]u_+[2]u_--u_+u_-,}
since the inclusion $\Sigma_3\subset U(2)$ factors through $\Z/2\wr \Z/3$, where $[3]u_+=[3]u_-=0$.

\begin{theorem}\label{tbc}
The map $\nu^*$ is onto, and we have
\beg{etbc}{MU^*B\Sigma_3=MU_*[[u_\alpha,u_\gamma]]/([2]u_\alpha,\{3\}u_\gamma).}
\end{theorem}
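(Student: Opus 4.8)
The plan is to compute $MU^*B\Sigma_3$ by exploiting the subgroup structure $\Z/3\lhd\Sigma_3$ with quotient $\Z/2$, i.e.\ to run a transfer/restriction argument against the index-two subgroup and simultaneously against the Sylow $2$-subgroup. First I would use the diagram \rref{egamma}: since $\Sigma_3\subset U(2)$ factors through $\Z/2\wr\Z/3$, restriction to $B\Z/3$ corresponds to $\nu^*$, and the classes $u_\alpha,u_\gamma$ pull back from $BU(2)$, so they certainly live in $MU^*B\Sigma_3$ and satisfy $[2]u_\alpha=0$ (from $BO(2)$, since the determinant representation restricted to $\Sigma_3$ is still $2$-torsion) and $\{3\}u_\gamma=0$ (the relation \rref{ethree}). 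This gives a well-defined surjection-candidate map
\beg{emapfrom}{MU_*[[u_\alpha,u_\gamma]]/([2]u_\alpha,\{3\}u_\gamma)\longrightarrow MU^*B\Sigma_3,}
and the content of the theorem is that this is an isomorphism.

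For surjectivity of $\nu^*$ (equivalently, that the above map hits everything), I would argue that $B\Sigma_3$ localized at any prime $\ell\neq 2,3$ is trivial in reduced $MU$-cohomology, at $\ell=3$ the restriction $B\Z/3\to B\Sigma_3$ together with the transfer exhibits $MU^*B\Sigma_3_{(3)}$ as the $\Z/2$-invariants of $MU^*B\Z/3=MU_*[[u_\gamma]]/[3]u_\gamma$ (the $\Z/2$ acting by $u_\gamma\mapsto i(u_\gamma)$ up to the usual correction), and at $\ell=2$ the $2$-Sylow is $\Z/2$ and $MU^*B\Z/2=MU_*[[u_\alpha]]/[2]u_\alpha$ surjects onto the $2$-local part. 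Reassembling via the arithmetic fracture square shows $\nu^*$ is onto and pins down the image. Alternatively, and perhaps more cleanly, one can use the Gysin/cofiber sequence associated to the sphere bundle $S(\gamma_\R)\to B\Sigma_3$: since $\Sigma_3$ acts on $S(\gamma_\R)=S^1$ with $S^1/\Sigma_3$ contractible-like behavior and isotropy groups of order $\le 2$, the Euler class $u_\gamma$ and the class $u_\alpha$ coming from the $\Z/2$-quotient of the complement generate, which forces surjectivity.

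The main obstacle is injectivity, i.e.\ showing there are no further relations. The cleanest route is to compare with the free module structure: one shows that $MU_*[[u_\alpha,u_\gamma]]/([2]u_\alpha,\{3\}u_\gamma)$ is, as an $MU_*$-module (suitably completed), abstractly isomorphic to a known good model, and then that \rref{emapfrom} induces an isomorphism on that model. Concretely, I would resolve $\{3\}u_\gamma$ against $[2]u_\alpha$ by noting $\{3\}u_\gamma=[2]u_+[2]u_--u_+u_-$; modulo the ideal $([2]u_\alpha)$ one can massage $\{3\}u_\gamma$ so that the quotient ring is free over $MU_*[[u_\alpha]]/[2]u_\alpha$ on the basis $1,u_\gamma,u_\gamma^2,\dots$ truncated appropriately — this is a Weierstrass-preparation-type statement, using that $\{3\}u_\gamma$ has the form $(\text{unit})\cdot u_\gamma + (\text{higher order in }u_\gamma)$ plus terms involving $u_\alpha$. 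Then I would check that the Atiyah–Hirzebruch spectral sequence for $MU^*B\Sigma_3$ collapses with $E_2=MU_*\otimes H^*(B\Sigma_3;\Z)$ having the matching size in each degree — here $H^*(B\Sigma_3;\Z)$ is classical (nontrivial $3$-torsion in degrees $\equiv 0\bmod 4$ and $2$-torsion in degrees $\equiv 0\bmod 2$ roughly), and a dimension count degree-by-degree shows \rref{emapfrom} is an isomorphism because it is a surjection between modules of equal (finite, in each degree) rank. The delicate point throughout is handling the completed tensor/power-series subtleties and confirming that the torsion bookkeeping on the algebraic side exactly matches $H^*(B\Sigma_3)$; this is where I expect the real work to be.
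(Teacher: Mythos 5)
Your third paragraph is, in essence, the paper's entire proof: one records that $H^*(B\Sigma_3;\Z)=\Z[u_\alpha,u_\gamma]/(2u_\alpha,3u_\gamma)$, observes that the Atiyah--Hirzebruch spectral sequence collapses at $E_2$ because its multiplicative generators over $MU_*$ lift to the genuine Euler classes $u_\alpha,u_\gamma\in MU^*B\Sigma_3$ that you construct in your first paragraph (so every class is a permanent cycle --- you should say this explicitly rather than ``check that it collapses''), and then concludes that the map from $MU_*[[u_\alpha,u_\gamma]]/([2]u_\alpha,\{3\}u_\gamma)$ is an isomorphism because it is one on associated graded objects for the AHSS filtration. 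Note that this single comparison delivers surjectivity and injectivity simultaneously, so your entire second paragraph (localization at $2$ and $3$, transfers, the fracture square, the Gysin alternative) is a detour you do not need; worse, it carries its own unproved sub-claims, most seriously that the $\Z/2$-invariants of $MU^*B\Z/3$ are generated over $MU_*$ by the norm class --- establishing that would again reduce to a filtered/graded comparison, at which point you may as well run the global AHSS argument. (The Gysin variant is too vague to assess: $\gamma_\R$ is a real representation, so it has no $MU$-Euler class without complexifying, and the $\Sigma_3$-action on the relevant sphere is not free.)

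Two points in your third paragraph need repair before it closes. First, the proposed Weierstrass-type structure --- freeness over $MU_*[[u_\alpha]]/[2]u_\alpha$ on powers of $u_\gamma$ --- is false: the two relations force $u_\alpha u_\gamma=0$ (the combination $(\{3\}u_\gamma)u_\alpha-([2]u_\alpha)u_\gamma$ equals $u_\alpha u_\gamma$ times a unit), so the $u_\gamma$-part is $u_\alpha$-torsion and the ring is a fiber product over $MU_*$ rather than a free module. The correct comparison object is simply $MU_*\otimes \Z[u_\alpha,u_\gamma]/(2u_\alpha,3u_\gamma)$; that the leading-term ideal of $([2]u_\alpha,\{3\}u_\gamma)$ is exactly $(2u_\alpha,3u_\gamma)$ --- note $u_\alpha u_\gamma=u_\alpha(3u_\gamma)-u_\gamma(2u_\alpha)$ already lies there --- is then forced by the surjection onto $E_\infty$. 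Second, ``a surjection between modules of equal finite rank in each degree'' is not literally available: each $MU^n(B\Sigma_3)$ is an infinite product, and the finiteness you need lives in the individual bidegrees $E_2^{s,t}$ with $s>0$; the conclusion comes from a complete-filtration comparison of associated graded pieces, not a rank count in total degree. With those adjustments your argument coincides with the published one.
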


Before proving this, let us make a couple of observations. First of all, we have
$$[2]u_\alpha \equiv 2u_\alpha \mod (u_\alpha^2),$$
$$\{3\}u_\gamma\equiv 3u_\gamma\mod (u_\gamma^2, u_\gamma u_\alpha).$$
Thus, 
$$(\{3\}u_\gamma)u_\alpha-([2]u_\alpha)u_\gamma\equiv u_\alpha u_\gamma
\mod (u_\alpha^2 u_\gamma, u_\alpha u_\gamma^2),$$
and thus, in the relations of \rref{etbc} imply
\beg{euag}{u_\alpha u_\gamma=0.
}
In particular, the relations \rref{etbc} imply the relation
$$u_\gamma-\widetilde{u}_\gamma=0.$$

\vspace{3mm}
To illustrate the low terms of these series, let us write the universal formal group law as
\beg{euinversal}{\begin{array}{l}z+_Fw=z+w+(-x_1)zw+(x_1^2-x_2)(z^2w+zw^2)+\\(-2x_3+2x_1x_2-2x_1^3)(z^3w+zw^3)+(-3x_3+4x_1x_2-4x_1^3)z^2w^2\\+(-x_4+4x_1x_3-3x_1^2x_2+3x_1^4+x_2^2)(z^4w+zw^4)\\+(-2x_4+11x_1x_3-11x_1^2x_2+10x_1^4+3x_2^2)(z^3w^2+z^2w^3)+... \end{array}}
where $x_1=2m_1,$ $x_2=3m_2,$ $x_3=2m_3-4m_1^3,$ $x_4=5m_4$. Then we have
$$\begin{array}{l}u_\gamma-\tilde{u}_\gamma=u_\alpha u_\gamma(-x_1-x_1^2u_\alpha\\
+(-2x_1^3+x_1x_2-x_3)u_\alpha^2+(4x_1^3-4x_1x_2+3x_3)u_\gamma
\\+(-4x_1^4+3x_1^2x_2-3x_1x_3)u_\alpha^3+(8x_1^4-8x_1^2x_2+6x_1x_3)u_\alpha u_\gamma+...),\end{array}$$
the $[2]$-series is given by
$$\begin{array}{l}[2]u=2u-x_1u^2+(2x_1^2-2x_2)u^3+(-8x_1^3+8x_1x_2-7x_3)u^4\\
+(26x_1^4-28x_1^2x_2+8x_2^2+30x_1x_3-6x_4)u^5+...\;.\end{array}
$$
The $[3]$-series is given by
$$\begin{array}{l}
[3]u=3u-3x_1u^2+(9x_1^2-8x_2)u^3+(-51x_1^3+51x_1x_2-39x_3)u^4\\+(261x_1^4-285x_1^2x_2+72x_2^2+279x_1x_3-48x_4)u^5+...
\end{array}
$$
and the $\{3\}$-series is given by
$$\begin{array}{l}
\{3\}u_\gamma=u_\gamma(3-2x_1u_\alpha+(4x_1^2-4x_2)u_\alpha^2+(-9x_1^2+8x_2)u_\gamma\\
+(-16x_1^3+16x_1x_2-14x_3)u_\alpha^3+(56x_1^3-56x_1x_2+42x_3)u_\alpha u_\gamma+...).
\end{array}
$$

\vspace{3mm}
\begin{proof}[Proof of Theorem \ref{tbc}] We have
\beg{eahss}{H^*(B\Sigma_3;\Z)=\Z[u_\alpha,u_\gamma]/(2u_\alpha,3u_\gamma),}
so the Atiyah-Hirzebruch spectral sequence collapses to $E_2$. Since the right hand side maps
to $MU^*B\Sigma_3$ by $\nu^*$, and on the level of associated graded objects with respect
to the Atiyah-Hirzebruch filtration, it induces an isomorphism, it is an isomorphism.

\end{proof}

\vspace{5mm}

\section{The coefficients of $S^{\infty\alpha}\wedge MU$.}\label{s2}

From now on, we will consider $\alpha$ and $\gamma$ as complex representations of $\Sigma_3$. 
We observe that
\beg{esasg}{S^{\infty\alpha}\wedge S^{\infty\gamma}=\widetilde{E\mathscr{F}[\Sigma_3]}
}
where $\mathscr{F}[\Sigma_3]$ is the family of proper subgroups of $\Sigma_3$, and $\widetilde{X}$
denotes the unreduced suspension of $X$.

By \rref{esasg}, $S^{\infty\alpha}\wedge MU_{\Sigma_3}$
is the homotopy pullback of the diagram
\beg{esadiag}{\diagram
&\widetilde{E\mathscr{F}[\Sigma_3]}\wedge MU\dto\\
S^{\infty\alpha}\wedge F(S(\infty\gamma)_+,MU)\rto & \widetilde{E\mathscr{F}[\Sigma_3]}\wedge 
F(S(\infty\gamma)_+,MU)
\enddiagram
}
where $MU=MU_{\Sigma_3}$. The coefficients of the upper right corner were calculated by tom Dieck \cite{td},
and we have
\beg{es3td}{(\widetilde{E\mathscr{F}[\Sigma_3]}\wedge MU)_*
=\Phi^{\Sigma_3}MU_*=MU_*[u_\alpha,u_\alpha^{-1},u_\gamma,u_\gamma^{-1},b_i^\alpha, b_i^\gamma].
}
The elements $b_i^\alpha$ have dimension $2(i-1)$, and the elements $b_i^\gamma$ have dimension $2(i-2)$,
$i\in \N$.

To calculate the coefficients of the lower left corner, we use the fact that $S(\infty\gamma)$ is the homotopy 
pushout of the diagram
\beg{einfgamma}{\diagram
\Sigma_3\times_{\Z/2} E\Z/2\rto\dto & E\Sigma_3\\
\Sigma_3/(\Z/2). &
\enddiagram
}
Attaching a disjoint base point, mapping into $MU_{\Sigma_3}$, smashing with $S^{\infty\alpha}$, and
taking fixed points,
we get that the lower left corner of the diagram \rref{esadiag} is the homotopy pullback of the
diagram
\beg{ellsadiag}{\diagram
& (S^{\infty\alpha}\wedge F((E\Sigma_3)_+,MU))^{\Sigma_3}\dto\\
\Phi^{\Z/2}MU\rto & \widehat{MU}_{\Z/2} 
\enddiagram
}
where $\widehat{E}$ denotes the (fixed points of the) Tate cohomology of an equivariant spectrum $E$.
Given the relation \rref{euag}, however, we conclude that the vertical arrow \rref{ellsadiag} induces
an isomorphism on coefficients, and thus, the fixed points of the lower left corner of \rref{esadiag}
are just $\Phi^{\Z/2}MU_{\Z/2}$, whose coefficients, by \cite{td}, are
$$(\Phi^{\Z/2}MU)_*=MU_*[u_\alpha, u_{\alpha}^{-1},b_i^\alpha, i\in \N].$$
Now the coefficients of the lower right corner of \rref{esadiag} are obtained from the coefficients
of \rref{ellsadiag} by inverting $u_\gamma$, thus, by inverting $u_\gamma$ in the coefficients of the 
lower left corner of \rref{ellsadiag}, which gives $0$. In other words, the coefficients of the lower right
corner of \rref{esadiag} are $0$, and we obtain

\begin{theorem}\label{tsamu}
We have an isomorphism of rings
\beg{etsamu}{(S^{\infty\alpha}\wedge MU_{\Sigma_3})_*\cong
(\Phi^{\Z/2}MU)_*\scriptstyle\prod{} \displaystyle(\Phi^{\Sigma_3}MU)_*.
}
\end{theorem}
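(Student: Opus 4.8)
The plan is to realize $(S^{\infty\alpha}\wedge MU_{\Sigma_3})_*$ as the homotopy pullback of the diagram \rref{esadiag} and then read off the coefficients from the three corners already computed above. By \rref{esasg} and the standard isotropy-separation square, $S^{\infty\alpha}\wedge MU_{\Sigma_3}$ is the homotopy pullback of \rref{esadiag}. Taking $\Sigma_3$-fixed points of a homotopy pullback of spectra gives a homotopy pullback of spectra, so $(S^{\infty\alpha}\wedge MU_{\Sigma_3})^{\Sigma_3}$ is the homotopy pullback of the fixed points of the three non-initial corners. The upper right corner has coefficients $(\widetilde{E\mathscr{F}[\Sigma_3]}\wedge MU)_* = \Phi^{\Sigma_3}MU_*$ by tom Dieck, as recorded in \rref{es3td}. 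The analysis surrounding \rref{ellsadiag} identifies the lower left corner: its fixed points are, by the relation \rref{euag}, equivalent to $\Phi^{\Z/2}MU_{\Z/2}$, with coefficients $(\Phi^{\Z/2}MU)_* = MU_*[u_\alpha,u_\alpha^{-1},b_i^\alpha]$. Finally, inverting $u_\gamma$ kills these coefficients, so the lower right corner of \rref{esadiag} has zero coefficients.

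With the corners in hand, I would run the Mayer--Vietoris / Milnor exact sequence for the homotopy pullback. Since the lower right corner is contractible (its homotopy groups vanish), the long exact sequence
$$\cdots \to \pi_n(\text{pullback}) \to (\Phi^{\Z/2}MU)_n \oplus (\Phi^{\Sigma_3}MU)_n \to 0 \to \cdots$$
degenerates, and together with the $\lim^1$ term also vanishing (again because the bottom right is zero, or more carefully because one of the maps into it is the zero map and the relevant towers are constant), one gets a short exact sequence that collapses to an isomorphism
$$\pi_*(\text{pullback}) \cong (\Phi^{\Z/2}MU)_* \oplus (\Phi^{\Sigma_3}MU)_*$$
as abelian groups. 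Because the pullback over a zero object (in coefficients) is just the product, the ring structure on the left is the product ring structure on the right; hence \rref{etsamu}. One should be slightly careful that the splitting is multiplicative: since the target ring in \rref{esadiag} has zero coefficients, the pullback ring is literally the product ring $(\Phi^{\Z/2}MU)_* \times (\Phi^{\Sigma_3}MU)_*$ with no twisting, which is exactly the claimed form.

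The main technical point — and the place where one must be careful rather than merely formal — is the identification of the lower left corner of \rref{esadiag} with $\Phi^{\Z/2}MU_{\Z/2}$ via the vertical map of \rref{ellsadiag} being a coefficient isomorphism. This rests on the claim that the map $\Phi^{\Z/2}MU \to \widehat{MU}_{\Z/2}$ in \rref{ellsadiag} is an equivalence after the reduction forced by $u_\alpha u_\gamma = 0$ of \rref{euag}; concretely, the Tate construction $\widehat{MU}_{\Z/2}$ and the geometric fixed points $\Phi^{\Z/2}MU$ become identified once we are working $S^{\infty\alpha}$-locally, because smashing with $S^{\infty\alpha}$ has the effect of inverting $u_\alpha$, and in the presence of \rref{euag} inverting $u_\alpha$ forces $u_\gamma$ to zero, so the relevant Borel-to-Tate comparison map loses its only obstruction. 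I would verify this at the level of the homotopy pullback \rref{ellsadiag}: the upper right corner $(S^{\infty\alpha}\wedge F((E\Sigma_3)_+,MU))^{\Sigma_3}$ has $u_\gamma$ inverted (it is built from $E\Sigma_3$, which is $\mathscr{F}[\Sigma_3]$-trivial in the relevant sense) and $u_\alpha$ inverted (from $S^{\infty\alpha}$), hence has zero coefficients by the same argument; then the pullback \rref{ellsadiag} collapses onto $\Phi^{\Z/2}MU$ exactly as the outer pullback collapses onto the product. Everything else is a routine bookkeeping of Milnor sequences and the observation that a homotopy pullback along a map to a spectrum with vanishing homotopy is a product on homotopy groups.
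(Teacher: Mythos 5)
Your overall route is the paper's: realize the spectrum as the homotopy pullback of \rref{esadiag}, read off the three corners, observe that the lower right corner has vanishing coefficients, and conclude that the pullback is the product. The upper right corner, the vanishing of the lower right corner, and the Mayer--Vietoris bookkeeping are all fine. But the step you yourself single out as the main technical point --- identifying the lower left corner of \rref{esadiag} with $\Phi^{\Z/2}MU$ --- is argued incorrectly, in two mutually inconsistent ways. You first claim that the bottom horizontal map $\Phi^{\Z/2}MU\r\widehat{MU}_{\Z/2}$ of \rref{ellsadiag} becomes an equivalence; it does not. It is the usual geometric-fixed-points-to-Tate comparison, with coefficients $MU_*[u_\alpha,u_\alpha^{-1},b_i^\alpha]$ on the source and $u_\alpha^{-1}MU_*[[u_\alpha]]/[2]u_\alpha$ on the target, and these are not isomorphic; moreover $S^{\infty\alpha}$-locality changes nothing here, since both sides already have $u_\alpha$ inverted. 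You then claim that the upper right corner $(S^{\infty\alpha}\wedge F((E\Sigma_3)_+,MU))^{\Sigma_3}$ has zero coefficients because ``$u_\gamma$ is inverted.'' It does not: mapping out of $E\Sigma_3$ gives Borel cohomology and inverts nothing, so the only inversion is of $u_\alpha$ coming from $S^{\infty\alpha}$, and the coefficients are $u_\alpha^{-1}MU^*B\Sigma_3$, which by \rref{euag} (inverting $u_\alpha$ forces $u_\gamma=0$) equals $u_\alpha^{-1}MU_*[[u_\alpha]]/[2]u_\alpha\neq 0$.

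That last computation is in fact the correct argument, but it proves something different from what you wrote: it shows that the \emph{vertical} arrow of \rref{ellsadiag} induces an isomorphism on coefficients, whence the homotopy pullback collapses onto the remaining corner $\Phi^{\Z/2}MU$. Note that neither of your two claims would yield your conclusion even if true: if the upper right corner were trivial while $\widehat{MU}_{\Z/2}$ is not, the pullback would be the fiber of $\Phi^{\Z/2}MU\r\widehat{MU}_{\Z/2}$ rather than $\Phi^{\Z/2}MU$; and if the bottom map were an equivalence, the pullback would be the upper right corner. So the missing ingredient is precisely the statement that $u_\alpha^{-1}MU^*B\Sigma_3\r(\widehat{MU}_{\Z/2})_*$ is an isomorphism, which is where \rref{euag} actually enters. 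With that correction, the rest of your argument goes through as written.
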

\qed

\vspace{5mm}

\section{The coefficients of $F(S(\infty \alpha)_+,MU)$.}\label{s3}

The spectrum $F(S(\infty \alpha)_+,MU)^{\Sigma_3}$ is the homotopy pullback of the diagram
\beg{esoamu}{\diagram
& F(E\Z/2_+,\Phi^{\Z/3}MU)^{\Z/2}\dto\\
F(B\Sigma_3,MU)\rto & F(E\Z/2_+,\widehat{MU}_{\Z/3})^{\Z/2}.
\enddiagram
}
We begin by calculating the top right corner of \rref{esoamu}. We notice that $\Z/2$ acts on 
$(\Phi^{\Z/3}MU)_*$ by a permutation representation, with 
$$\widehat{H}^0(\Z/2, (\Phi^{\Z/3}MU)_*)=MU_*[u_\gamma,u_\gamma^{-1},b_{2i}^\gamma]/2.$$
In this situation, it is formal that the $MU$-Serre spectral sequence collapses, and we have

\begin{proposition}\label{psoamu1}
The coefficient ring $(F(E\Z/2_+,\Phi^{\Z/3}MU)^{\Z/2})_*$ is the pullback of rings
$$\diagram
& MU_*[u_\gamma,u_\gamma^{-1}, b_{2i}^\gamma][[u_\alpha]]/[2]u_\alpha\dto|>>\tip\\
(\Phi^{\Z/3}MU_*)^{\Z/2}\rto & MU_*[u_\gamma,u_\gamma^{-1}, b_{2i}^\gamma]/2
\enddiagram
$$
where $u_\alpha\mapsto 0$ by the vertical arrow. (Note that the lower left corner of this diagram
means the algebraic fixed points in the category of rings.)
\end{proposition}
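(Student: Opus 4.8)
The goal is to identify the coefficients of the homotopy fixed point spectrum $F(E\Z/2_+,\Phi^{\Z/3}MU)^{\Z/2}$ as a specific pullback of rings. The natural tool is the $\Z/2$-equivariant $MU$-based Serre (or Atiyah–Hirzebruch) spectral sequence computing $MU$-Borel cohomology of the point with coefficients in the $\Z/2$-spectrum $\Phi^{\Z/3}MU$. I would first recall that, since $\Z/3$ is normal in $\Sigma_3$ with quotient $\Z/2$, the geometric fixed point spectrum $\Phi^{\Z/3}MU_{\Sigma_3}$ carries a residual $\Z/2$-action, and by tom Dieck's calculation \cite{td} its coefficients are $MU_*[u_\gamma,u_\gamma^{-1},b_i^\gamma]$ (the $u_\alpha,b_i^\alpha$ having been killed since $\alpha$ restricts nontrivially but the relevant fixed-point data for the $\Z/3$-part only retains the $\gamma$-classes — this is already used in the paragraph preceding the proposition). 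The key structural input, stated just before the proposition, is that $\Z/2$ permutes these generators by a permutation representation, so that $\widehat H^0(\Z/2,(\Phi^{\Z/3}MU)_*) = MU_*[u_\gamma,u_\gamma^{-1},b_{2i}^\gamma]/2$ and, crucially, the higher Tate cohomology is concentrated in a way that makes the spectral sequence collapse.

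**Steps.** First I would set up the homotopy fixed point spectral sequence $H^s(\Z/2;(\Phi^{\Z/3}MU)_t) \Rightarrow (F(E\Z/2_+,\Phi^{\Z/3}MU)^{\Z/2})_{t-s}$, with the $u_\alpha$-adic filtration: the class $u_\alpha\in MU^2(B\Z/2)$ plays the role of the polynomial generator of $H^*(\Z/2;\Z)$ in the $MU$-theory sense, subject only to $[2]u_\alpha=0$. Second, I would observe that because $\Z/2$ acts on $(\Phi^{\Z/3}MU)_*$ through a permutation representation on the monomial basis (swapping the two copies of $\C$ inside $\gamma$, hence fixing $u_\gamma$ and $b_{2i}^\gamma$ while pairing up the odd-indexed classes), the cohomology $H^*(\Z/2;(\Phi^{\Z/3}MU)_*)$ splits as the fixed subring in degree zero plus $2$-torsion contributions supported on the $u_\alpha$-power classes. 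This is exactly the input that forces the differentials to vanish: any potential differential would have to map a permutation-free summand to a place that the permutation structure rules out, so the spectral sequence degenerates at $E_2$. Third, with collapse in hand, the $E_\infty$-page assembles into an extension, and I would identify it explicitly: the associated graded is $(\Phi^{\Z/3}MU_*)^{\Z/2}$ in $u_\alpha$-filtration zero, and $MU_*[u_\gamma,u_\gamma^{-1},b_{2i}^\gamma]/2$ times positive powers of $u_\alpha$ in higher filtration, subject to $[2]u_\alpha=0$. Finally, I would recognize this extension as precisely the pullback in the statement: the upper right term $MU_*[u_\gamma,u_\gamma^{-1},b_{2i}^\gamma][[u_\alpha]]/[2]u_\alpha$ is the completed version carrying the $u_\alpha$-tower, the lower left term $(\Phi^{\Z/3}MU_*)^{\Z/2}$ is the genuine fixed subring in filtration zero, and the bottom arrow to $MU_*[u_\gamma,u_\gamma^{-1},b_{2i}^\gamma]/2$ (mod $2$, and $u_\alpha\mapsto 0$) glues them along their common image; the pullback condition encodes exactly that an element is a fixed-point class iff its filtration-zero piece lifts to a genuine fixed point and its higher pieces are $2$-divisible in the appropriate sense.

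**Main obstacle.** The routine part is writing down the spectral sequence and the permutation-action bookkeeping; the genuine content is the collapse argument and — more subtly — the extension problem. Even granting $E_2=E_\infty$, one must check there is no hidden multiplicative or additive extension that would prevent the answer from being literally the displayed pullback rather than some twisted variant. I expect the cleanest route is to avoid delicate extension arguments by instead producing the pullback square \emph{directly} at the spectrum level: $F(E\Z/2_+,\Phi^{\Z/3}MU)^{\Z/2}$ sits in a pullback with $F(E\Z/2_+,(\Phi^{\Z/3}MU)^{h\Z/2}\text{-Tate-truncation})$, or more concretely one uses the isotropy-separation square for the $\Z/2$-action on $\Phi^{\Z/3}MU$ relating its honest fixed points $(\Phi^{\Z/3}MU)^{\Z/2}=(\Phi^{\Z/3}MU_*)^{\Z/2}$, its Borel-completed version (whose coefficients carry the $u_\alpha$-tower), and its Tate version (whose coefficients are the mod-$2$ quotient). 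Taking coefficients of that square and checking—via the collapsed spectral sequence—that each corner has the claimed coefficients, then identifying the homotopy pullback of rings with the algebraic pullback (which is valid here because one of the maps is surjective on coefficients, so the relevant $\lim^1$ vanishes), completes the proof. The one place to be careful is confirming surjectivity of the reduction map onto $MU_*[u_\gamma,u_\gamma^{-1},b_{2i}^\gamma]/2$, which again follows from the permutation-representation structure of the $\Z/2$-action.
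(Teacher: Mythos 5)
Your main line---the $MU$-Serre/homotopy fixed point spectral sequence collapses because $(\Phi^{\Z/3}MU)_*$ is an even-degree permutation $\Z/2$-module, and the resulting object is identified with the displayed pullback---is the same as the paper's, which itself offers essentially no more detail than the sentence ``in this situation it is formal that the spectral sequence collapses.'' However, two concrete points in your write-up are off. First, the input data: $(\Phi^{\Z/3}MU)_*$ is not $MU_*[u_\gamma,u_\gamma^{-1},b_i^\gamma]$; by tom Dieck (and as the paper records in Section \ref{s5}, Example 2) it is $MU_*[u_+,u_-,u_+^{-1},u_-^{-1},b_i^+,b_i^-]$ with the residual $\Z/2$ swapping $+$ and $-$. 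The trivial summand of this permutation module is spanned by the symmetric monomials and corresponds to $MU_*[u_\gamma^{\pm 1},b_{2i}^\gamma]$ via $u_\gamma=u_+u_-$ and $b_i^+b_i^-$, while the fixed subring $(\Phi^{\Z/3}MU_*)^{\Z/2}$ is strictly larger, containing all the norm classes $x+\rho x$; that is exactly why the lower left corner of the pullback has to be the full algebraic fixed ring rather than the symmetric subring. Your description of the action as ``pairing up the odd-indexed classes'' lands on the correct $\widehat H^0$ only by a coincidence of notation.

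Second, and more seriously, the route you propose for the step you yourself flag as the genuine content---resolving the additive and multiplicative extensions by ``producing the pullback square directly at the spectrum level'' from the isotropy-separation square---does not work as stated. For a $\Z/2$-spectrum $X$, that square exhibits the \emph{genuine} fixed points $X^{\Z/2}$ as the homotopy pullback of $\Phi^{\Z/2}X$ and $F(E\Z/2_+,X)^{\Z/2}$ over the Tate construction; it does not present the homotopy fixed points (which is what the proposition computes) as a pullback of the genuine fixed points and anything else. Moreover the corners would not carry the coefficients you assign: the Tate construction of $\Phi^{\Z/3}MU$ has coefficients obtained by inverting $u_\alpha$ in $MU_*[u_\gamma^{\pm1},b_{2i}^\gamma][[u_\alpha]]/[2]u_\alpha$, not the plain quotient $MU_*[u_\gamma^{\pm1},b_{2i}^\gamma]/2$, and the claim that the genuine fixed points of $\Phi^{\Z/3}MU$ have homotopy equal to the algebraic fixed points is itself something requiring proof. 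So the identification of the collapsed $E_\infty$-term with the stated pullback of rings---precisely the delicate point, since the pullback is not obviously the ``obvious'' extension of $(\Phi^{\Z/3}MU_*)^{\Z/2}$ by the positive $u_\alpha$-filtration---is left without a working argument in your proposal. You would need either an explicit comparison map from $(F(E\Z/2_+,\Phi^{\Z/3}MU)^{\Z/2})_*$ to both corners inducing an isomorphism onto the pullback, or a decomposition of $\Phi^{\Z/3}MU$ as an $MU$-module $\Z/2$-spectrum into pieces realizing the trivial and free permutation summands (taking care with the completion issues of the infinite wedge), neither of which is supplied.
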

\qed

Now in the coefficients of the lower right corner of \rref{esoamu}, \rref{euag} is in effect, so we obtain
\beg{esoamu2}{F(E\Z/2_+,\widehat{MU}_{\Z/3})_*=((\widehat{MU}_{\Z/3})_*)^{\Z/2}.
}
For the same reason, $F(B\Sigma_3,MU)_*$ can be rewritten as the (algebraic) pullback of rings
\beg{esoamu3}{\diagram
& (MU^*B\Z/3)^{\Z/2}\dto|>>\tip^{res}\\
MU^*B\Z/2\rto|>>\tip^{res} & MU_*.
\enddiagram
}
(Also note that at the upper right corner, we have algebraic fixed points in the category of rings.)

\vspace{3mm}
But now by commutation of limits, the pullback of rings
\beg{efppull}{\diagram
&((\Phi^{\Z/3}MU)_*)^{\Z/2}\dto\\
(MU^*B\Z/3)^{\Z/2}\rto& ((\widehat{MU}_{\Z/3})_*)^{\Z/2}
\enddiagram
}
is
\beg{efpull1}{((MU_{\Z/3})_*)^{\Z/2}.}
(Again, this means algebraic fixed points in the category of rings.) Thus, we obtain 

\begin{theorem}\label{tsoamu}
The coefficient ring $F(S(\infty\alpha)_+,MU_{\Sigma_3})_*$ is the limit of the diagram of rings
\beg{etsoamu}{\diagram
&& MU_*[u_\gamma,u_\gamma^{-1},b_{2i}^\gamma][[u_\alpha]]/[2]u_\alpha\dto|>>\tip\\
&((MU_{\Z/3})_*)^{\Z/2}\dto|>>\tip^{res}\rto & MU_*[u_\gamma,u_\gamma^{-1},b_{2i}^\gamma]/2\\
MU^*B\Z/2\rto|>>\tip^{res} & MU_* &
\enddiagram
}
\end{theorem}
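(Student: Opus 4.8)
The plan is to assemble the claimed limit diagram \rref{etsoamu} by gluing together the two intermediate computations already in hand: Proposition \ref{psoamu1}, which identifies the coefficient ring of $F(E\Z/2_+,\Phi^{\Z/3}MU)^{\Z/2}$ as a pullback, and the discussion of $F(B\Sigma_3,MU)_*$ and $F(E\Z/2_+,\widehat{MU}_{\Z/3})_*$ that precedes the statement. First I would take the homotopy pullback square \rref{esoamu} and pass to coefficient rings; since all four spectra have coefficients concentrated in even degrees with vanishing $\lim^1$ (the relevant towers are pro-systems of surjections of finitely generated $MU_*$-modules in each degree), the Milnor sequence degenerates and the square of coefficient rings is again a pullback. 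This reduces the theorem to identifying the three non-total vertices of that pullback square and then enlarging the resulting two-corner pullback into the displayed three-dimensional limit diagram.

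Next I would identify each corner. The top right corner $F(E\Z/2_+,\Phi^{\Z/3}MU)^{\Z/2}_*$ is exactly the pullback of Proposition \ref{psoamu1}, so it already contributes the upper row of \rref{etsoamu}, namely the arrow $MU_*[u_\gamma,u_\gamma^{-1},b_{2i}^\gamma][[u_\alpha]]/[2]u_\alpha \to MU_*[u_\gamma,u_\gamma^{-1},b_{2i}^\gamma]/2$ together with $((\Phi^{\Z/3}MU)_*)^{\Z/2}$ as its other source. The bottom right corner is, by \rref{esoamu2}, just $((\widehat{MU}_{\Z/3})_*)^{\Z/2}$, and the bottom left corner $F(B\Sigma_3,MU)_*$ is, by \rref{esoamu3}, the algebraic pullback of $MU^*B\Z/2 \to MU_* \leftarrow (MU^*B\Z/3)^{\Z/2}$ — this uses the relation \rref{euag}, which collapses the Tate-theoretic correction term exactly as in the proof of Theorem \ref{tsamu}. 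Substituting these descriptions, the pullback square \rref{esoamu} becomes a limit over a diagram that includes, as a sub-diagram, the square \rref{efppull} with vertices $((\Phi^{\Z/3}MU)_*)^{\Z/2}$, $(MU^*B\Z/3)^{\Z/2}$, and $((\widehat{MU}_{\Z/3})_*)^{\Z/2}$; by commutation of limits — concretely, because $(-)^{\Z/2}$ commutes with the pullback defining $(MU_{\Z/3})_*$ out of $\Phi^{\Z/3}MU$, $MU^*B\Z/3$ and $\widehat{MU}_{\Z/3}$ — this square has limit $((MU_{\Z/3})_*)^{\Z/2}$, the middle left vertex of \rref{etsoamu}.

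Finally I would reorganize: replacing the sub-square \rref{efppull} by its limit $((MU_{\Z/3})_*)^{\Z/2}$ and leaving the remaining arrows $MU^*B\Z/2 \xrightarrow{res} MU_*$, $((MU_{\Z/3})_*)^{\Z/2}\xrightarrow{res} MU_*[u_\gamma,u_\gamma^{-1},b_{2i}^\gamma]/2$, and the top arrow from Proposition \ref{psoamu1} in place, yields precisely the diagram \rref{etsoamu}, and the iterated-limit identity (the limit of a diagram equals the limit obtained by first taking the limit of a sub-diagram) finishes the proof. The step I expect to be the main obstacle is the bookkeeping in the middle: one must check that the map $((MU_{\Z/3})_*)^{\Z/2} \to ((\Phi^{\Z/3}MU)_*)^{\Z/2}$ arising from commutation of limits agrees with the map induced by the geometric-fixed-point projection used in Proposition \ref{psoamu1}, and similarly that the restriction $((MU_{\Z/3})_*)^{\Z/2}\to MU_*[u_\gamma,u_\gamma^{-1},b_{2i}^\gamma]/2$ is the composite through $((\widehat{MU}_{\Z/3})_*)^{\Z/2}$ followed by the collapse induced by \rref{euag}; tracking these identifications through the nested pullbacks, rather than any single hard computation, is where care is required.
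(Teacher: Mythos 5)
Your proposal follows essentially the same route as the paper: pass to coefficients in the homotopy pullback \rref{esoamu}, identify the top right corner via Proposition \ref{psoamu1}, the bottom two corners via \rref{esoamu2} and \rref{esoamu3} (both using \rref{euag}), collapse the sub-square \rref{efppull} to $((MU_{\Z/3})_*)^{\Z/2}$ by commutation of limits, and reassemble into \rref{etsoamu}. Your added remarks on the degeneration of the Milnor/Mayer--Vietoris sequence and on checking compatibility of the maps are sensible explicit justifications of steps the paper leaves implicit, but they do not constitute a different argument.
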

\qed

\vspace{3mm}
It is worth pointing out that by \cite{kriz,strickland,hu}, the ring $(MU_{\Z/3})_*$ is now completely known, 
and the action of $\Z/2$ is explicit:

\begin{theorem}\cite{hu}\label{ttt}
Let $p$ be a prime. For $1 \leq \alpha \leq p-1$, let $\alpha^{-1}$ be the inverse of $\alpha$ in 
$(\Z/p)^{\times}$. (Namely, we choose 
the representative in $\Z$ such that 
$1 \leq \alpha^{-1} \leq p-1$.) In $\Z$, write $\alpha \cdot \alpha^{-1} = 1+k_{\alpha}p$. 

The ring $(MU_{\Z/p})_{\ast}$ is isomorphic to
\[ MU_{\ast}[u b_{i, j}^{(\alpha)}, \lambda_{\alpha}, q_j\ | \ \alpha \in (\Z/p)^{\times}, i, j \geq 0]/\sim \]
where the relations are: 
\[ b^{(1)}_{0, 0} = u, b_{0, 1}^{(1)} = 1, b_{0, j}^{(1)} = 0 \]
for $j \geq 2$, 
\[ b_{i, j}^{(\alpha)} - a_{i, j}^{(\alpha)} = u b_{i, j+1}^{(\alpha)} \]
where $a_{i, j}^{(\alpha)}$ is the coefficient of $x^iu^j$ in $x +_F [\alpha]u$, 
\[ q_0 = 0, q_j-r_j = uq_{j+1} \]
where $r_j$ is the coefficient of $u^j$ in $[p]u$, and 
\[ \lambda_1 = 1, \lambda_{\alpha} b_{0, 1}^{\alpha} = 1+ k_{\alpha} q_1  .\]
\end{theorem}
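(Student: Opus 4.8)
The plan is to compute $(MU_{\Z/p})_*$ by isotropy separation, following the method of \cite{kriz,strickland}. Since the trivial subgroup is the only proper subgroup of $\Z/p$, the cofiber sequence $E\Z/p_+\to S^0\to\widetilde{E\Z/p}$ exhibits $(MU_{\Z/p})_*$ as the limit of the Tate square
$$\diagram
& (\Phi^{\Z/p}MU)_*\dto\\
MU^*B\Z/p\rto & (\widehat{MU}_{\Z/p})_*.
\enddiagram$$
First I would identify the three corners. The lower-left corner is $MU^*B\Z/p=MU_*[[u]]/([p]u)$, with $u$ the Euler class of a chosen faithful character; the lower-right corner is its localization obtained by inverting $u$; and the upper-right corner is tom Dieck's geometric fixed point ring \cite{td}, a Laurent-type $MU_*$-algebra in which every Euler class $u_\alpha=[\alpha]u$ ($\alpha\in(\Z/p)^\times$) is inverted and which is generated over $MU_*$ by associated classes $b_i^{(\alpha)}$. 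As all three corners are concentrated in even degrees, the square computes $(MU_{\Z/p})_*$ additively, and the outcome is a free $MU_*$-module (\cite{kriz}).

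Next I would name the generators of the presentation as elements of this pullback and show they generate. The class $u$ comes from the Borel corner. For each $\alpha$ and $i$, a single geometric generator $b_i^{(\alpha)}$ admits a compatible Borel lift only after introducing its iterated integral divisions by the Euler class; these divisions are exactly the family $b_{i,j}^{(\alpha)}$, characterized by
$$b_{i,j}^{(\alpha)}-a_{i,j}^{(\alpha)}=u\,b_{i,j+1}^{(\alpha)},$$
where $a_{i,j}^{(\alpha)}$ is the coefficient of $x^iu^j$ in $x+_F[\alpha]u$, the content of the relation being that subtracting off the formal-group coefficient $a_{i,j}^{(\alpha)}$ makes the result divisible by $u$ back in the unlocalized ring. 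The base cases for $\alpha=1$ (so $[\alpha]u=u$ and $x+_Fu$), namely $b_{0,0}^{(1)}=u$, $b_{0,1}^{(1)}=1$ and $b_{0,j}^{(1)}=0$ for $j\ge2$, simply record that the chosen character's own Euler class is $u$. In the same way, the vanishing $[p]u=0$ in $MU^*B\Z/p$ is recorded by the classes $q_j$, with $q_0=0$ encoding the relation itself and $q_j-r_j=u\,q_{j+1}$ its iterated divisions by $u$, where $r_j$ is the coefficient of $u^j$ in $[p]u$.

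Third, I would pin down the remaining ring structure, which is what forces the units $\lambda_\alpha$. In the geometric corner each $u_\alpha=[\alpha]u$ is invertible, so $b_{0,1}^{(\alpha)}=u_\alpha/u\equiv\alpha\pmod u$ is a unit there; in the unlocalized ring it becomes a unit only after the correction prescribed by gluing along the Tate corner. Writing $\alpha\cdot\alpha^{-1}=1+k_\alpha p$ and using $q_1\equiv p\pmod u$, the compatible partial inverse is a class $\lambda_\alpha\equiv\alpha^{-1}\pmod u$ with $\lambda_\alpha b_{0,1}^{(\alpha)}=1+k_\alpha q_1$ and $\lambda_1=1$. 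I would verify that all these relations hold in the pullback by checking them in the geometric and Borel corners separately and matching images in the Tate corner; this is a direct manipulation of the formal group law and tom Dieck's classes, and it shows that the presented ring $R$ maps onto $(MU_{\Z/p})_*$.

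The hard part will be \emph{completeness} of the relations: that $R\to(MU_{\Z/p})_*$ is injective. I would argue additively. Using the recursions for $b_{i,j}^{(\alpha)}$ and $q_j$ and the invertibility relations for $\lambda_\alpha$ to put every element into a normal form, I would exhibit an explicit $MU_*$-spanning set of $R$, and then show it is a basis by comparing with the known even, free additive structure of $(MU_{\Z/p})_*$, whose Poincar\'e series is read off from the even Tate square above. Matching the two $MU_*$-module bases degree by degree---equivalently, showing the spanning set carries no $MU_*$-linear relations beyond those already imposed---is the crux, and amounts to proving that the telescoping and invertibility relations account for precisely the kernel of $R\to(MU_{\Z/p})_*$ and nothing more. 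The case $p=2$ settled in \cite{strickland} is the model; the genuinely new bookkeeping for general $p$ is the simultaneous handling of the $p-1$ conjugate characters and the integers $k_\alpha$ controlling the units $\lambda_\alpha$.
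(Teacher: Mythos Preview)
Your approach is essentially the one the paper itself indicates, but you should be aware that the paper does not actually prove this theorem: it is quoted from \cite{hu} and followed by a bare \qed. What the paper does contribute (in Section~\ref{s5}, Example~1) is a partial sketch of the \emph{generators} half, and there the method matches yours closely: one sets up the Tate square for $\Z/p$, identifies the geometric fixed point ring as $u^{-1}R$ for an explicit polynomial ring $R$ (which already contains the elements $u_\alpha^{-1}u_1$, i.e.\ your $\lambda_\alpha$, as generators), and then invokes the abstract pullback Theorem~\ref{tpulll}. That theorem is precisely a formalization of your ``iterated integral divisions by the Euler class'': given $h:R\to S$ with $u$ regular, it produces generators $t_i$ of $\phi^{-1}S$ by repeatedly subtracting a representative $q(t_i)$ and dividing by $u$. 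Applied here it yields exactly the $b_{i,j}^{(\alpha)}$ and $q_j$. The paper also isolates the additive summand $MU_*\{[p]v/v\}$ as the kernel of $MU^*B\Z/p\to S$ before applying the pullback machinery, which is a small but useful organizational point you leave implicit.

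On the other hand, the paper is explicit that this only gives generators: ``Somewhat more work \cite{hu} is needed to determine the relations (i.e.\ the extension, and proving that there are no other relations except the ones we already know).'' So your identified ``hard part''---injectivity of $R\to(MU_{\Z/p})_*$ via a normal-form/basis-matching argument---is exactly what is deferred to \cite{hu}, and your outline of it (normal forms from the recursions, comparison of $MU_*$-bases using the known freeness from \cite{kriz}) is the right shape but remains, as you acknowledge, only a plan. In short: your proposal and the paper's sketch coincide on strategy; neither completes the proof in this document.
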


\qed

Note that the relations imply that $\lambda_{\alpha}u_{\alpha} = u$ (where $u_{\alpha} = b_{0,0}^{(\alpha)}$).

\vspace{3mm}
One can, in fact, be more explicit about \rref{efpull1}. We will return to this point in Section 
\ref{efixedgen} below.

\vspace{5mm}

\section{The coefficients of $MU_{\Sigma_3}$.}\label{s4}

Now $MU_{\Sigma_3}$ is the homotopy pullback of the diagram
\beg{emuu}{\diagram
& S^{\infty\alpha}\wedge MU\dto\\
F(S(\infty \alpha)_+,MU)\rto & S^{\infty\alpha}\wedge F(S(\infty \alpha)_+,MU).
\enddiagram
}
The coefficients of the upper right and lower left corners are known by Theorem \ref{tsamu}
and Theorem \ref{tsoamu}. The coefficients of the lower right corner are obtained by inverting $u_\alpha$
in \rref{etsoamu}. We see, however, that after inverting $u_\alpha$, the middle row of \rref{etsoamu} becomes
an isomorphism. Also, in \rref{emuu}, the lower leftmost term of \rref{etsoamu} produces
$(MU_{\Z/2})_*$, which is known by Strickland (see Theorem \ref{ttt}).

\vspace{3mm}
Thus, we need to calculate the algebraic pullback of rings corresponding to the uppermost right corner of
\rref{etsoamu}, when taken through the diagram \rref{emuu}. This diagram has the form
\beg{eurc}{\diagram
& MU_*[u_\gamma,u_\gamma^{-1}, u_\alpha, u_\alpha^{-1}, b_i^{\alpha},b_i^\gamma]\dto\\
MU_*[u_\gamma,u_\gamma^{-1},b_{2i}^\gamma][[u_\alpha]]/[2]u_\alpha\rto &
u_\alpha^{-1}MU_*[u_\gamma,u_\gamma^{-1},b_{2i}^\gamma][[u_\alpha]]/[2]u_\alpha.
\enddiagram
}
In particular, we need to calculate the vertical map \rref{eurc}. As usual, we have
\beg{ebia}{b_i^\alpha\mapsto \text{coeff}_{x^i}(x+_F u_\alpha).
}
Thus, we need to determine where the elements $b_{2i+1}^\gamma$ map.
To this end, we consider the $MU^*$-cohomology of $B\Z/2\wr S^1$. Writing
$$MU^*(S^1\times S^1)=MU_*[[u_+,u_-]],$$
the Serre spectral sequence collapses to $E_2$. Denoting the Euler class of the map $\Z/2\wr S^1\r \Z/2$
by $w$, we need a relation of the form
$$0=w(u_++u_- +HOT).$$
The relation can be detected by inflation associated with the map
\beg{einf}{(\Z/2\ltimes S^1)\times S^1\r \Z/2\wr S^1}
where on the left hand side, an element $\alpha$ of the first copy of $S^1$ maps to
$(\alpha,\alpha^{-1})$, and an element $\beta$ of the second copy of $S^1$ maps to $(\beta,\beta)$.

\begin{lemma}\label{linj1}
This inflation in $MU^*$-cohomology is injective (and also remains injective when
inverting the Euler class $w$ of the projection to $\Z/2$).
\end{lemma}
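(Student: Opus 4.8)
The plan is to reduce the injectivity of the inflation map
$$\rho^*\colon MU^*(B(\Z/2\wr S^1)) \longrightarrow MU^*(B((\Z/2\ltimes S^1)\times S^1))$$
to a computation with ordinary cohomology and a comparison of known $MU^*$-rings, exploiting that the source is already understood. First I would identify the source: as in the discussion preceding the lemma, $MU^*(B(\Z/2\wr S^1))$ is a quotient of $MU_*[[u_+,u_-,w]]$, where $u_\pm$ are the Euler classes pulled back from $B(S^1\times S^1)$ and $w$ is the Euler class of the sign character $\Z/2\wr S^1\to\Z/2$; the $\Z/2$-invariant subring $MU_*[[u_+,u_-]]^{\Z/2}$ is topologically generated by $u_++_F u_-$-type symmetric classes together with $u_+u_-$, and the relation defining the quotient has the form $0=w\cdot(u_++u_-+\text{higher order})$ together with $[2]w=0$. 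On the target side, the group $(\Z/2\ltimes S^1)\times S^1$ has classifying space $BO(2)\times BS^1$, so by Wilson's computation (quoted in Section~\ref{s1}) $MU^*(B((\Z/2\ltimes S^1)\times S^1))=MU^*[[u_\alpha,u_\gamma]]/([2]u_\alpha,u_\gamma-\widetilde u_\gamma)\,\widehat{\otimes}\,MU_*[[v]]$, where $v$ is the Euler class of the second $S^1$-factor.

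Next I would make the map explicit on generators. Under the homomorphism $(\Z/2\ltimes S^1)\times S^1\to\Z/2\wr S^1$ described in the excerpt, the sign character restricts to the sign character of $\Z/2\ltimes S^1=O(2)$, so $w\mapsto u_\alpha$; and the two coordinate characters of the maximal torus $S^1\times S^1\subset\Z/2\wr S^1$ pull back to $\alpha\mapsto(\alpha,\alpha^{-1})$ tensored with $\beta\mapsto(\beta,\beta)$, so $u_+\mapsto z+_F v$ and $u_-\mapsto i(z)+_F v$ where $z$ is the Euler class of the tautological character of the torus in $O(2)$ and $i$ is the formal inverse; note $z$ itself is not in the image, but $z+_F i(z)$-symmetric combinations are, consistently with $u_\gamma=u_+u_-$ going to the Euler class of $\gamma\oplus(\text{line})$. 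The key algebraic observation is then that $u_+u_-\mapsto u_\gamma+\text{(terms involving }v)$ and $u_++_F u_-\mapsto$ a class whose leading term recovers $u_\alpha v$-free data, so that the images of $w,u_+u_-,u_++_Fu_-$ are ``formal-coordinates'' in which the target ring is a power series ring.

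The cleanest route to injectivity is to pass to associated graded objects for the Atiyah--Hirzebruch / skeletal filtration: both source and target have $MU^*$-cohomology that is a free $MU_*$-module (the relevant spectral sequences collapse, as observed repeatedly in Sections~\ref{s1}--\ref{s3}, because the integral cohomology of $BO(2)$, $BS^1$, $B\Z/2$ is concentrated in a checkerboard pattern), so it suffices to check that the induced map on $H^*(-;\Z)$ is injective. On integral cohomology $H^*(B(\Z/2\wr S^1))=\Z[c_2,w]/(2w)\oplus(\text{a }w\text{-torsion-free part})$ — more precisely it is $\Z[p_1,e]/(2e)$-like with $e\leftrightarrow w$ — and the map to $H^*(BO(2))\otimes H^*(BS^1)=\Z[p_1,e]/(2e)\otimes\Z[v]$ sends $e\mapsto e$, $p_1\mapsto$ a polynomial with unit leading coefficient in the appropriate generators; injectivity there is an elementary check on polynomial rings, and it persists after inverting $w$ (resp. $e$) since inverting a non-zero-divisor-up-to-the-relation commutes with the filtration. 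I then lift back: a collapsing AHSS with injective $E_2$-map forces the $MU^*$-map to be injective.

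The main obstacle I expect is bookkeeping rather than conceptual: pinning down exactly which symmetric power series in $u_+,u_-$ generate the image of $\mu^*$ (the analogue of the $u_\alpha,u_\gamma$ description for the wreath product), and then verifying that after substituting $u_+\mapsto z+_F v$, $u_-\mapsto i(z)+_Fv$ these images, together with $u_\alpha$ (the image of $w$), really do form a regular system of ``coordinates'' generating a power-series subring of $MU^*(BO(2)\times BS^1)$ over which the target is free — in particular checking that no nontrivial $MU_*$-linear combination of monomials in the source maps to zero modulo the target's defining relations $[2]u_\alpha=0$ and $u_\gamma=\widetilde u_\gamma$. Handling the $w$-inverted statement requires only observing that the relation $[2]w=0$ becomes, after inverting $w$, a relation expressing all positive-degree information in a Tate-like form that still injects, exactly as in the passage from \rref{ellsadiag} to its $u_\gamma$-inverted version; so the inverted case is not genuinely harder.
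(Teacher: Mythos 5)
There is a genuine gap, and it sits at the center of your reduction. You propose to check injectivity on the $E_2$-pages of the Atiyah--Hirzebruch spectral sequences, asserting that both collapse because the integral cohomology of the relevant classifying spaces is concentrated ``in a checkerboard pattern.'' That is true for the source $B(\Z/2\wr S^1)$ but false for the target: $H^*(BO(2);\Z)$ contains odd-degree $2$-torsion (already $H^3(BO(2);\Z)\cong\Z/2$, generated by the integral Bockstein of $w_2$), while Wilson's ring $MU^*[[u_\alpha,u_\gamma]]/([2]u_\alpha,u_\gamma-\widetilde{u}_\gamma)$ is concentrated in even degrees; hence the AHSS for $MU^*(B(O(2)\times S^1))$ has nontrivial differentials and does not collapse at $E_2$. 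Consequently, injectivity of the map of $E_2$-pages---even if verified---says nothing about injectivity on $E_\infty$, since $E_\infty$ of the target is a proper subquotient of its $E_2$, and your filtration comparison does not go through. The repair, which is the route the paper takes, is to compare instead the $MU$-based Serre spectral sequences of the fibrations over $B\Z/2$ with fibre $B(S^1\times S^1)$ on both sides; these do collapse, and the map of $E_2$-pages is induced by the $\Z/2$-equivariant ring map \rref{errr}, $u\mapsto x+_Fz$, $v\mapsto i(x)+_Fz$, so that injectivity reduces to a check on $\Z/2$-invariants and on $\Z/2$-Tate cohomology, where the map becomes $uv\mapsto(x+_Fz)(i(x)+_Fz)$ into $MU_*[[xi(x),z]]$ modulo $2$.

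A second, related weakness: the step you defer as ``an elementary check on polynomial rings'' is exactly where the content lies. After inverting $w$, the torsion-free part of the source dies and only the $2$-torsion (Tate) part survives, so the $w$-inverted statement is carried entirely by the piece your sketch never engages with; your closing remark that the inverted case ``is not genuinely harder'' inverts the actual difficulty. Any correct proof must at some point establish the injectivity of $uv\mapsto(x+_Fz)(i(x)+_Fz)$ on the mod-$2$ line, and the proposal as written neither isolates nor verifies this.
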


\begin{proof}
Restricting to the $S^1\times S^1$-subgroups, the inflation on $MU^*$-cohomology can be written as
\beg{errr}{MU_*[[u,v]]\r MU_*[[x,z]]
}
where
$$u\mapsto x+_F z,\; v\mapsto i(x)+_F z$$
where $i(x)$ is the formal inverse. Clearly, this is injective. To deduce the statement of the Lemma, 
by the Serre spectral sequence (which collapses both in the source and the target), it 
suffices to show that \rref{errr} induces an injection on the $\Z/2$-Tate cohomology $\widetilde{H}{\Z/2}$. 
However, explicitly, on $\widetilde{H}{\Z/2}$, we get the map
$$MU_*[[uv]]\mapsto MU_*[[xi(x),z]]
$$
where
$$uv\mapsto (x+_Fz)(i(x)+_Fz),$$
which is injective. 
\end{proof}

Now we know the cohomology of the
classifying space of the source of \rref{einf}. Explicitly, since the first factor is $O(2)$, we can write
$$MU^*B((\Z/2\ltimes S^1)\times S^1)=MU_*[[u_\alpha,u_\gamma,z]]/([2]u_\alpha, u_\gamma-
\widetilde{u}_\gamma).$$
The inflation map is
$$w\mapsto u_\alpha,\; u_+\mapsto u_++_F z,\; u_-\mapsto u_-+_F z.$$
Further, in the target of the inflation, we have the relation
$$(u_++_F z)(u_-+_Fz)-(i(u_+)+_Fz)(i(u_-)+_F z).$$
Thus, in $MU^*B(\Z/2\wr S^1)$, we obtain the relation
\beg{eupm}{u_+u_--(u_++_Fw)(u_-+_Fw),}
which is of the required form. Thus, we have proved

\begin{theorem}\label{twr}
The ring $MU^*B(\Z/2\wr S^1)$ is isomorphic to the quotient of $MU_*[[u_\alpha,u_\gamma,w]]/[2]w$
by the relation \rref{eupm} (where, as usual, we write $u_\alpha=u_++_F u_-$, $u_\gamma=u_+u_-$).
\end{theorem}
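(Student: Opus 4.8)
The plan is to compute $MU^*B(\Z/2\wr S^1)$ by the standard strategy of producing enough relations to make a surjection onto the target ring, and then detecting that the map is injective by a faithful inflation. First I would observe that the Serre spectral sequence of the fibration $B(S^1\times S^1)\to B(\Z/2\wr S^1)\to B\Z/2$ collapses at $E_2$ (the $\Z/2$-action on $MU^*(S^1\times S^1)=MU_*[[u_+,u_-]]$ permutes $u_+,u_-$, so the relevant group cohomology is concentrated in ways that force collapse, exactly as in the proof of Proposition \ref{psoamu1}), so $MU^*B(\Z/2\wr S^1)$ is built out of $MU_*[[u_+,u_-]]$ together with the Euler class $w$ of the projection $\Z/2\wr S^1\to\Z/2$. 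This already gives that $MU^*B(\Z/2\wr S^1)$ is a quotient of $MU_*[[u_\alpha,u_\gamma,w]]$, that $[2]w=0$ (since $w$ restricts to the Euler class of the sign representation of $\Z/2$), and that there is one further relation of the form $0=w\cdot(u_++u_-+HOT)$ expressing the extension.

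Next I would pin down that last relation. The point is that the relation is $w$-torsion-type data, so it is detected after restriction along a map that hits $w$ faithfully; the inflation \rref{einf} from $(\Z/2\ltimes S^1)\times S^1$ is exactly such a map, and Lemma \ref{linj1} guarantees it is injective on $MU^*$ of classifying spaces even after inverting $w$. So it suffices to identify the image of the relation under this inflation. Using the known description of $MU^*B\big((\Z/2\ltimes S^1)\times S^1\big)=MU_*[[u_\alpha,u_\gamma,z]]/([2]u_\alpha,u_\gamma-\widetilde u_\gamma)$ (the first factor being $O(2)$, handled by Wilson's theorem quoted above) and the formulas $w\mapsto u_\alpha$, $u_+\mapsto u_++_Fz$, $u_-\mapsto u_-+_Fz$, the relation $u_\gamma=\widetilde u_\gamma$ in the target pulls back to $(u_++_Fz)(u_-+_Fz)=(i(u_+)+_Fz)(i(u_-)+_Fz)$, i.e. to \rref{eupm}. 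Since the inflation is injective (Lemma \ref{linj1}), \rref{eupm} is precisely the missing relation in $MU^*B(\Z/2\wr S^1)$, and it visibly has the required form $w\cdot(u_++u_-+HOT)$ after expanding.

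Finally I would assemble these pieces: the quotient ring $R:=MU_*[[u_\alpha,u_\gamma,w]]/([2]w,\,u_+u_--(u_++_Fw)(u_-+_Fw))$ surjects onto $MU^*B(\Z/2\wr S^1)$ by the relations just established, and the composite $R\to MU^*B(\Z/2\wr S^1)\hookrightarrow MU^*B\big((\Z/2\ltimes S^1)\times S^1\big)$ is injective because the second map is Lemma \ref{linj1} and the first map is checked to be injective directly — one verifies that in $R$ every element can be written using $u_\pm,w$ in a way compatible with the inflation formulas, so no further relations can hold. Hence the surjection $R\to MU^*B(\Z/2\wr S^1)$ is an isomorphism, which is the assertion of the theorem.

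The main obstacle I anticipate is the bookkeeping around the collapse of the Serre spectral sequence and, more substantively, confirming that \rref{eupm} together with $[2]w=0$ really generate \emph{all} the relations — i.e. that $R$ has the right size. The clean way around this is exactly the injectivity argument via Lemma \ref{linj1}: rather than directly controlling $R$, one uses that $MU^*B\big((\Z/2\ltimes S^1)\times S^1\big)$ is completely known and that the inflation is injective even after inverting $w$, so any hypothetical extra relation in $MU^*B(\Z/2\wr S^1)$ would have to already be visible (hence already imposed) downstairs. The subtlety is making sure the map $R\to MU^*B\big((\Z/2\ltimes S^1)\times S^1\big)$ is injective on the nose; this is where one must be slightly careful about the $w$-torsion, but it reduces to the already-established injectivity on the $w$-inverted and $w$-adically-complete pieces.
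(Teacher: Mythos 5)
Your proposal is correct and follows essentially the same route as the paper: collapse of the Serre spectral sequence for $B(S^1\times S^1)\to B(\Z/2\wr S^1)\to B\Z/2$ reduces the problem to finding one relation of the form $0=w(u_++u_-+HOT)$ beyond $[2]w=0$, and that relation is identified as \rref{eupm} by pushing it through the inflation \rref{einf} into $MU^*B\bigl((\Z/2\ltimes S^1)\times S^1\bigr)$, where it vanishes by $u_\gamma=\widetilde u_\gamma$, with Lemma \ref{linj1} supplying the injectivity needed both to detect the relation and to rule out further ones. Your closing worry about whether $R$ has ``the right size'' is resolved exactly as in the paper, by comparing associated graded pieces of the collapsed spectral sequence rather than by a separate direct injectivity check on $R$.
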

\qed

To see what this has to do with the elements $b_{2i+1}^\gamma$ in \rref{eurc}, we note that 
we have 
$$MU_*[u_\gamma,u_\gamma^{-1}, u_\alpha, u_\alpha^{-1},b_i^\alpha, b_i^\gamma]=
\Phi^{\Z/2}(MU\wedge (BU\times BU)_+)[u_\gamma,u_\gamma^{-1}]$$
where $\Z/2$ acts by interchanging the $BU$ coordinates. By the same method as in \cite{kriz}, we then
obtain a map from this ring to $MU^*B(\Z/2\wr S^1)$ given by
\beg{eass}{u_\alpha\mapsto w,\; b_i^\alpha\mapsto \text{coeff}_{x^i}(x+_F w),\;
u_\gamma\mapsto u_\gamma,\; b_{i}^\gamma\mapsto \text{coeff}_{x^i}(x+_Fu_+)(x+_Fu_-).}
In more detail, start with the composition
\beg{egeomtate}{MU\wedge BU\r (\Phi^{S^1}MU)^{S^1}\r(\widehat{MU}_{S^1})^{S^1}
}
where we work over the universe containing only the trivial and standard representation of $S^1$, and
the first map \rref{egeomtate} comes from the tom Dieck calculation \cite{td}. Smashing two copies of 
\rref{egeomtate}, we obtain a (naively) $\Z/2$-equivariant map, and taking its homotopy fixed points 
gives the required map \rref{eass}. (To be even more precise, in the target, we have to compose 
with another map, completing, on the level of Borel cohomology, the smash product, and then 
inverting the Euler classes.)

\begin{lemma}\label{linj2}
The map \rref{eass} is injective. 
\end{lemma}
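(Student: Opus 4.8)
The plan is to reduce the injectivity of \rref{eass} to the injectivity statements we have already established, namely Lemma \ref{linj1} and the tom Dieck description of $\Phi^{S^1}MU$ and of $\Phi^{\Z/2}(MU\wedge(BU\times BU)_+)$. The key observation is that the source ring of \rref{eass}, namely $MU_*[u_\gamma,u_\gamma^{-1},u_\alpha,u_\alpha^{-1},b_i^\alpha,b_i^\gamma]$, has a transparent monomial basis over $MU_*$, and the target $MU^*B(\Z/2\wr S^1)$ was just computed in Theorem \ref{twr}, so injectivity is a concrete algebraic assertion about where that basis goes.

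First I would split off the factor coming from the determinant coordinate. The assignment $u_\alpha\mapsto w$, $b_i^\alpha\mapsto \text{coeff}_{x^i}(x+_F w)$ is exactly the classical $MU$-cohomology identification $\Phi^{\Z/2}(MU\wedge BU_+)\hookrightarrow MU^*BO(2)[w^{-1}]$ from \cite{kriz} for the single $BU$-coordinate fixed by the $\Z/2$-action — that is, the $O(2)$-direction sitting inside $\Z/2\wr S^1$ via $\kappa\circ\lambda$ precomposed appropriately. This part is injective by the cited result, and the $b_i^\alpha$ are polynomial generators mapping to algebraically independent elements (the coefficients of $x+_Fw$ over $MU_*[[w]]/[2]w$). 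Second I would treat the $u_\gamma$-direction: $u_\gamma\mapsto u_+u_-=u_\gamma$ and $b_i^\gamma\mapsto \text{coeff}_{x^i}\big((x+_Fu_+)(x+_Fu_-)\big)$. The right way to see that these land injectively is to pass through the inflation of Lemma \ref{linj1}: under the composite with the inflation $MU^*B(\Z/2\wr S^1)\hookrightarrow MU^*B((\Z/2\ltimes S^1)\times S^1)$, the symmetric functions in $u_+,u_-$ survive, and the $z$-coordinate supplies the extra formal variable that makes the coefficients of $(x+_Fu_+)(x+_Fu_-)$ into independent generators over the image of the $O(2)$-part — precisely mirroring the fact that $b_i^\gamma$ are polynomial generators on the source side. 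Concretely, I would build a commuting square with \rref{eass} on top, the inflation \rref{einf} on the right, and on the left the evident map of the source ring into $\Phi^{\Z/2}\big(MU\wedge(BU\times BU\times BU)_+\big)$ localized, where the third $BU$ carries the $z$; then injectivity of \rref{eass} follows from injectivity of that left map (classical, as each $BU$ contributes polynomial generators) together with injectivity of the inflation.

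In more detail, the strategy is: (1) write the source of \rref{eass} as $\Phi^{\Z/2}(MU\wedge(BU\times BU)_+)[u_\gamma^{-1}]$ as in the displayed identity preceding \rref{eass}; (2) observe that \rref{eass} is, up to inverting Euler classes, the map induced on $\Phi^{\Z/2}$ by the two-fold smash of \rref{egeomtate} followed by the completion/multiplication map into Borel cohomology — so it is the $\Z/2$-fixed points of a map whose underlying non-equivariant map is the classical injective map $MU\wedge BU\to (\widehat{MU}_{S^1})^{S^1}=MU^*BS^1[u^{-1}]$ smashed with itself; (3) on $\Z/2$-Tate cohomology (equivalently, after inverting $w$), identify the target with $MU^*B(S^1\times S^1)[u_+^{-1},u_-^{-1}]$ completed at $w$, where the map becomes $u_\pm\mapsto$ the two variables and the $b_i^{(\pm)}$ map to the coefficients of $x+_Fu_\pm$, which is visibly injective since these are polynomial generators; (4) conclude using the Tate square / Lemma \ref{linj1} that injectivity after inverting $w$ plus injectivity of the inflation forces injectivity of \rref{eass} itself. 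The main obstacle I expect is step (4): one must control the kernel of \rref{eass} \emph{before} inverting $w$, i.e. rule out $w$-divisible phantom kernel elements. I would handle this by noting that the source ring is a free $MU_*[u_\gamma^{\pm1}]$-module with basis the monomials in $u_\alpha^{\pm1},b_i^\alpha,b_i^\gamma$, that under \rref{eass} the image of $u_\alpha=w$ is not a zero-divisor modulo the relation \rref{eupm} (since Theorem \ref{twr} presents the target as $MU_*[[u_\alpha,u_\gamma,w]]/[2]w$ modulo one further relation, and $[2]w$ is $w$ times a unit-leading series), hence $w$-adic separatedness holds and no kernel survives; combined with the already-proven injectivity after inverting $w$, this gives the Lemma.
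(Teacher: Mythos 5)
Your overall strategy --- reduce injectivity of \rref{eass} to a torus-level statement by composing with the inflation of Lemma \ref{linj1} and/or passing to the $w$-inverted (Tate) level --- is the same one the paper uses, but two of the load-bearing steps in your write-up do not hold as stated. First, step (4) rests on the claim that $w$ is a non-zero-divisor in $MU^*B(\Z/2\wr S^1)$ because ``$[2]w$ is $w$ times a unit-leading series.'' It is not: $[2]w=w(2-x_1w+\cdots)$, and $2$ is not a unit in $MU_*$, so in the quotient by $[2]w$ one has $w\cdot\bigl(2-x_1w+\cdots\bigr)=0$ with the second factor nonzero (its constant term is $2$, which does not lie in the ideal $([2]w)$). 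Thus $w$ is a zero-divisor, exactly as the Euler class is in $MU^*B\Z/2=MU_*[[w]]/[2]w$, and the proposed ``$w$-adic separatedness'' argument collapses. (Note also that if you had genuinely proved injectivity of the composite of \rref{eass} with $B\to B[w^{-1}]$, no further step would be needed; the reason step (4) feels necessary is that step (3) has not actually established that composite's injectivity: inverting $w$ does not identify the target with a localization--completion of $MU^*B(S^1\times S^1)$, but with its $\Z/2$-Tate construction, a mod-$2$ quotient of the symmetric part. Controlling the map there is precisely what the paper's appeal to the collapsing Serre spectral sequence, as in the proof of Lemma \ref{linj1}, accomplishes.)

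Second, the crux of the whole lemma --- that $b_i\mapsto \text{coeff}_{x^i}(x+_Fu)$ defines an \emph{injective} ring map $MU_*[b_i]\to MU_*[[u]]$, i.e.\ that these power series are algebraically independent over $MU_*$ --- is dismissed in your steps (2)--(3) as ``visibly injective since these are polynomial generators.'' Being polynomial generators of the source says nothing about the images; this is the one point where the paper does real work: it observes that the lowest-order term of the image of $b_i$ is $a_{1,i}u$, reduces a hypothetical relation to a relation among the $a_{1,i}$, and invokes the algebraic independence of the coefficients of the universal logarithm (Lazard's theorem). You could alternatively cite tom Dieck's integrality theorem here, but some justification is required. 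So: right general route, but the two places where an actual argument is needed are either asserted or supported by a false claim.
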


\begin{proof}
The proof proceeds in the same way as the proof of Lemma \ref{linj1}, once we prove that the
map induced by \rref{egeomtate} on coefficients is injective. This map is 
$$ b_i\mapsto \text{coeff}_{x^i}(x+_Fu)\in MU_*[[u]].$$
We must show that the images of the $b_i$'s are algebraically independent. To this end, note that
for $i\geq 1$, the lowest term of the power series in $u$ to which $b_i$ maps is $a_{1,i}u$. If there is
an algebraic relation between these elements, it must remain valid after dividing by $u$. But if those elements
are algebraically dependent, they are also algebraically dependent modulo $u$, which means that the $a_{1,i}$'s
are algebraically dependent over $MU_*$. This is well known not to be the case. (In fact, 
the coefficients of the series
$$\int_0^x\frac{dt}{\displaystyle \sum_{i\geq 0}a_{i1}t^i}$$ 
are the 
coefficients of the universal logarithm, which are algebraically independent by Lazard's theorem.)
\end{proof}

To see what happens to the $b_{i}^\alpha$'s, we can enhance the relation
\rref{eupm} by adding formally another formal variable $t$, thus obtaining
\beg{eupm1}{(u_++_F t)(u_-+_F t)-(u_++_Fw+_F t)(u_-+_F w+_F t).
}
Note that by the Serre spectral sequence, this relation must in fact follow fromo \rref{eupm}, but it is 
more convenient for our purposes. In effect, translating back via \rref{eass}, we obtain
\beg{eass1}{\sum_{j\geq 1}b_j^\gamma t^j=\sum_{j\geq 1}b_j^\gamma(u_\alpha+_F t)^j,}
valid in the bottom right term of \rref{eurc}. Note that examining the $t^{j-1}$ coefficient of \rref{eass1},
and using $[2]u_\alpha=0$, for $j$ odd, we obtain an expression containing a summand of 
$b_j^\gamma u_\alpha$ and possibly $b_k^\gamma u_\alpha$ with some additional coefficients for
$k<j$, modulo higher powers of $u_\alpha$. Working by induction on $j$, we can eliminate the summands
$b_k^\gamma u_\alpha$ with $k$ odd modulo higher powers of $u_\alpha$, and then repeat the procedure,
ultimately expressing $b_j^\gamma u_\alpha$ as a power series in
$u_\alpha$ (in powers $\geq 2$) whose coefficients are polynomials in the $b_i$'s with $i$
even. In particular, this
gives recursive relations in the lower right corner of \rref{eurc} of the form
\beg{eass2}{b_{2i+1}^\gamma=\sum_{j\geq 1} c_j u_\alpha^j,
}
where $c_j$ are polynomials with coefficients in the $b_{2k}^\gamma$'s. 

For example, we have
$$\begin{array}{l}b_1^\gamma=-b_2^\gamma u_\alpha-2x_1b_2^\gamma u_\alpha^2\\
+(-6x_1^2b_2^\gamma-3x_2b_2^\gamma+5b_4^\gamma)u_\alpha^3+
\\(-40x_1^3b_2^\gamma+x_1x_2b_2^\gamma-43x_3b_2^\gamma+54x_1b_4^\gamma)u_\alpha^4+...,
\end{array}$$
also
$$\begin{array}{l}
b_3^\gamma=2x_1b_2^\gamma+(6x_1^2b_2^\gamma+3x_2b_2^\gamma-6b_4^\gamma)u_\alpha
\\+(42x_1^3b_2^\gamma-3x_1x_2b_2^\gamma-33x_3b_2^\gamma-58x_1b_4^\gamma)u_\alpha^2+...,
\end{array}
$$
and
$$
b_5^\gamma=-2x_1^3b_2^\gamma+2x_1x_2b_2^\gamma+4x_3b_2^\gamma+4x_1b_4^\gamma+...\; .
$$

This lets us calculate
the pullback of rings \rref{eurc} by the same method as in \cite{hu,strickland}. The answer is the 
ring
\beg{er}{\begin{array}{l}R=MU_*[u_\gamma,u_\gamma^{-1}, u_\alpha,b_{i,j}^\alpha, q_j,
b_{2i}^\gamma, b_{2i+1,j}^\gamma]/\\
(b_{i,j}^\alpha-a_{ij}=b_{i,j+1}^\alpha u_\alpha, q_0u_\alpha, q_j-r_j=q_{j+1}u_\alpha,
b_{2i+1,j}^\gamma-c_j=b_{2i+1,j+1}^\gamma u_\alpha)
\end{array}}
Note that this maps canonically to $MU_*[u_\gamma,u_\gamma^{-1},b_{2i}^\gamma]/2$
by mapping via the vertical arrow in \rref{eurc} (which we determined explicitly), and taking the constant
term of the applicable $u_\alpha$-series. In summary, we have our main result:

\begin{theorem}\label{tmain}
The ring $(MU_{\Sigma_3})_*$ is the limit of the diagram of rings
\beg{etmain}{\diagram
&&R\dto|>>\tip\\
&((MU_{\Z/3})_*)^{\Z/2}\dto|>>\tip^{res}\rto & MU_*[u_\gamma,u_\gamma^{-1},b_{2i}^\gamma]/2\\
(MU_{\Z/2})_*\rto|>>\tip^{res} &MU_*
\enddiagram
}
where the rightmost vertical arrow is described above.
\end{theorem}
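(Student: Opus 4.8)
The theorem is the assembly step: the plan is to feed Theorems \ref{tsamu} and \ref{tsoamu}, together with the computation of $MU^*B(\Z/2\wr S^1)$ (Theorem \ref{twr}, Lemmas \ref{linj1} and \ref{linj2}), into the isotropy separation homotopy pullback square \rref{emuu}. First I would record that the three corners $S^{\infty\alpha}\wedge MU$, $F(S(\infty\alpha)_+,MU)$ and $S^{\infty\alpha}\wedge F(S(\infty\alpha)_+,MU)$ all have homotopy concentrated in even degrees: for the first this is Theorem \ref{tsamu} with tom Dieck's \rref{es3td}; for the second it is Theorem \ref{tsoamu}, whose answer is a limit of the manifestly even rings $MU^*B\Z/2$, $((MU_{\Z/3})_*)^{\Z/2}$ (even by Theorem \ref{ttt}), $MU_*[u_\gamma,u_\gamma^{-1},b_{2i}^\gamma][[u_\alpha]]/[2]u_\alpha$ and $MU_*[u_\gamma,u_\gamma^{-1},b_{2i}^\gamma]/2$; and the third is the $u_\alpha$-localization of the second. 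Combined with the fact that the right-hand comparison maps of \rref{emuu} are jointly surjective onto the lower-right corner in each degree -- which one checks from the explicit descriptions below, negative powers of $u_\alpha$ being supplied by the geometric fixed point factors where $u_\alpha$ is invertible and the remaining terms by $F(S(\infty\alpha)_+,MU)$ -- this forces the Mayer--Vietoris sequence of \rref{emuu} to break into short exact sequences, so that $(MU_{\Sigma_3})_*$ is the honest algebraic pullback of the three rings of coefficients. The whole proof then reduces to identifying those rings and the two maps between them.

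For the lower-right corner I would use that smashing an $MU_{\Sigma_3}$-module with $S^{\infty\alpha}$ inverts the Euler class $u_\alpha$ on coefficients, so its ring is obtained by inverting $u_\alpha$ in the limit diagram \rref{etsoamu}. Three collapses occur, all already noted in the text: the vertex $MU_*$ localizes to $0$ (since $u_\alpha\mapsto 0$ there) and drops out; the middle horizontal arrow of \rref{etsoamu} becomes an isomorphism after inverting $u_\alpha$, so the vertex $((MU_{\Z/3})_*)^{\Z/2}$ is absorbed; and $MU^*B\Z/2$ localizes to the Tate ring $(\widehat{MU}_{\Z/2})_*$. Thus the lower-right corner of \rref{emuu} is the product of $(\widehat{MU}_{\Z/2})_*$ with $u_\alpha^{-1}MU_*[u_\gamma,u_\gamma^{-1},b_{2i}^\gamma][[u_\alpha]]/[2]u_\alpha$, matching the two factors of $(S^{\infty\alpha}\wedge MU)_*\cong(\Phi^{\Z/2}MU)_*\times(\Phi^{\Sigma_3}MU)_*$ from Theorem \ref{tsamu}.

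Next comes the identification of the two comparison maps into that corner, which is the heart of the argument. On the $\Phi^{\Z/2}MU$-factor the map to $(\widehat{MU}_{\Z/2})_*$ is the usual completion map, and pulling it back against $MU^*B\Z/2\to(\widehat{MU}_{\Z/2})_*$ is exactly the $\Z/2$-isotropy separation square: its pullback is $(MU_{\Z/2})_*$, which is Strickland's presentation (Theorem \ref{ttt} for $p=2$). On the $\Phi^{\Sigma_3}MU$-factor the map to $u_\alpha^{-1}MU_*[u_\gamma,u_\gamma^{-1},b_{2i}^\gamma][[u_\alpha]]/[2]u_\alpha$ is the vertical arrow of \rref{eurc}; here I would invoke the $MU^*B(\Z/2\wr S^1)$ computation, which via \rref{eass} pins the images of $u_\alpha$ and of the $b_i^\alpha$, $b_{2i}^\gamma$, and whose enhanced relation \rref{eupm1} translates into the identity \rref{eass1}. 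Extracting the $t^{j-1}$-coefficient of \rref{eass1} and using $[2]u_\alpha=0$ in an induction over odd $j$ yields the recursions \rref{eass2}, expressing each $b_{2i+1}^\gamma$ as an explicit power series in $u_\alpha$ over the $b_{2k}^\gamma$'s. Pulling $(\Phi^{\Sigma_3}MU)_*$ back against $MU_*[u_\gamma,u_\gamma^{-1},b_{2i}^\gamma][[u_\alpha]]/[2]u_\alpha$ along this arrow is the square \rref{eurc}, and I would evaluate that pullback by the divided-generator method of \cite{strickland,hu}: adjoin generators $b_{i,j}^\alpha$, $q_j$, $b_{2i+1,j}^\gamma$ recording successive divisions by $u_\alpha$, subject to the relations displayed in \rref{er}, obtaining the ring $R$.

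Finally I would assemble. In the algebraic pullback over the lower-right corner the $\Phi^{\Z/2}MU$ and $MU^*B\Z/2$ pieces glue along the completion map to $(MU_{\Z/2})_*$; the $\Phi^{\Sigma_3}MU$ piece glues along \rref{eurc} to $R$; and the vertex $((MU_{\Z/3})_*)^{\Z/2}$ of \rref{etsoamu} persists, still mapping by restriction to $MU_*[u_\gamma,u_\gamma^{-1},b_{2i}^\gamma]/2$ and to $MU_*$, the latter matching the restriction $(MU_{\Z/2})_*\to MU_*$. The residual map $R\to MU_*[u_\gamma,u_\gamma^{-1},b_{2i}^\gamma]/2$ is the one induced by \rref{eurc} -- take the $u_\alpha$-series to their constant terms and reduce mod $2$. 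Collecting these, the resulting limit diagram is precisely \rref{etmain}, whose limit is $(MU_{\Sigma_3})_*$ by the first paragraph. I expect the principal obstacle to be the honest bookkeeping of the comparison maps through this reorganization -- in particular pinning down the relations \rref{eass1}--\rref{eass2} and the rightmost vertical arrow of \rref{etmain}, and verifying the joint surjectivity used in the first paragraph -- while the pullback evaluations themselves are routine by the methods of \cite{strickland,hu}.
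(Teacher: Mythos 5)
Your proposal is correct and follows essentially the same route as the paper: invert $u_\alpha$ in the diagram of Theorem \ref{tsoamu} to identify the lower-right corner of \rref{emuu}, recognize the $MU^*B\Z/2$ piece as assembling to $(MU_{\Z/2})_*$ via the $\Z/2$-Tate square, evaluate the remaining pullback \rref{eurc} by the divided-generator method using \rref{eass1}--\rref{eass2} to get $R$, and reassemble into \rref{etmain}. The only difference is that you make explicit the evenness/joint-surjectivity argument for passing from the homotopy pullback to the algebraic limit, which the paper leaves implicit.
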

\qed

\vspace{5mm}

\section{A method for computing certain pullbacks of rings}\label{s5}
\label{efixedgen}

In this section, we discuss how one can find generators of the types of rings which occur in equivariant 
cobordism as pullbacks of ``Tate diagrams". All rings considered are commutative associative unital rings.
Our main result is the following

\vspace{3mm}

\begin{theorem}\label{tpulll}
Let $h:R\r S$ be a homomoprhism of rings and let $h(u)=v$ where $u\in R$ is a regular element
(=non-zero divisor). Consider the induced homomorphisms
$$\overline{h}:R\r S/(v),$$
$$\phi:u^{-1}R\r v^{-1}S.$$
Denote $I=Ker(\overline{h})$, and let $Q\subseteq R$ be a set of representatives of the cosets $R/I$
and let, for $t\in T$, $q(t)\in Q$ denote the representative of the coset $t+I$.
Let
$T\subseteq R$ be a subset.
Assume
\begin{enumerate}
\item\label{tipull1}
The set $T\cup Q$ generates $R$ as a ring.

\item\label{tipull2}
The set $\{t-q(t)\mid t\in T\}$ generates $I$ as an ideal.

\item\label{tipull3}
The homomorphism $\overline{h}$ is onto.

\end{enumerate}

Then there exist unique elements $t_i\in \phi^{-1}S$, $q_{t,i}\in Q$ for $t\in T$, $i\in \N_0$ such that 
$$t_0=t$$
$$ut_{i+1}=t_i-q_{t,i}.$$
Additionally, the set
\beg{epulls}{Q\cup \{t_i\mid t\in T, i\in \N_0\}
}
generates the ring $\phi^{-1}S\subseteq u^{-1}R$.

\end{theorem}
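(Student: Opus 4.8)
The plan is to identify $\phi^{-1}(S) \subseteq u^{-1}R$ explicitly and show the listed set generates it. First I would observe that $\phi^{-1}(S)$ contains $R$ (since $h(R) \subseteq S$) and contains the prospective elements $t_i$; existence and uniqueness of the $t_i$ is the first thing to establish. Uniqueness is immediate: the recursion $ut_{i+1} = t_i - q_{t,i}$ determines $t_{i+1}$ from $t_i$ once $q_{t,i} \in Q$ is pinned down, because $u$ is regular in $R$ hence in $u^{-1}R$. For existence, I would argue inductively: given $t_i \in \phi^{-1}(S)$, its image $\phi(t_i) \in S$; using that $\overline h$ is onto (hypothesis \ref{tipull3}) I can choose $r \in R$ with $\overline h(r) = $ (image of $t_i$ in $S/(v)$), so $h(t_i - r) \in (v) = (h(u))$, whence $t_i - r = u \cdot (\text{something in } \phi^{-1}(S))$ — here one uses that $u$ is regular so division by $u$ inside $u^{-1}R$ is unambiguous and the quotient still maps into $v^{-1}S$, in fact into $S$ since $h(t_i - r)/v \in S$. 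Replacing $r$ by its representative $q(r) = q_{t,i} \in Q$ (they differ by an element of $I = \ker \overline h$, which is absorbed into the $u$-divisible part) gives the element $t_{i+1}$. One should note $q_{t,i}$ depends only on the coset, which is why it lies in $Q$.

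The main content is showing the set \rref{epulls} generates $\phi^{-1}(S)$. Let $R'$ denote the subring of $u^{-1}R$ generated by \rref{epulls}; clearly $R' \subseteq \phi^{-1}(S)$. Since $Q \cup T$ generates $R$ (hypothesis \ref{tipull1}) and $t = t_0 \in R'$ for each $t \in T$, we get $R \subseteq R'$. Now take an arbitrary $\xi \in \phi^{-1}(S)$. The strategy is an approximation/filtration argument by powers of $u$: I claim that for each $n \geq 0$ one can write $\xi = (\text{element of } R') + u^n \eta_n$ with $\eta_n \in \phi^{-1}(S)$. Granting this, the conclusion follows provided one also knows that $\phi^{-1}(S) = R'$ "at the $u$-adic level" — i.e. that an element of $\phi^{-1}(S)$ divisible by arbitrarily high powers of $u$ and congruent to something in $R'$ actually lies in $R'$. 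This is where I expect the real subtlety, and where completeness of the relevant rings (these $MU$-cohomology rings are power series rings in $u_\alpha$, hence $u_\alpha$-adically complete, or at least the relevant submodules are) is invoked — the sum $\sum$ of the successive corrections converges. So concretely: by the existence argument above applied to $\xi$ in place of $t$, there are $q_0 \in Q$ and $\xi_1 \in \phi^{-1}(S)$ with $\xi = q_0 + u\xi_1$; iterating, $\xi = q_0 + uq_1 + u^2 q_2 + \cdots$ with $q_j \in Q$, and this infinite sum is exactly the kind of element that is captured by the $t_i$'s once we track that the $q_j$'s arising are the $q_{t,j}$'s — but $\xi$ need not be one of the $t$'s. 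The resolution is to first write $\xi \in R[u^{-1}]$ as a Laurent-type expression $\xi = u^{-N} r$ for some $r \in R$, $N \geq 0$, expand $r$ as a polynomial in the generators $Q \cup T$, and then use the recursions \rref{eass2}-style — i.e. $ut_{i+1} = t_i - q_{t,i}$ — to rewrite each occurrence of $u^{-1} t$ (for $t \in T$) as $t_1 + u^{-1} q_{t,0}$, pushing all negative powers of $u$ onto elements of $Q$; since $Q$ represents $R/I$ and $T - q(T)$ generates $I$ (hypothesis \ref{tipull2}), the $u^{-1} q_{t,0}$ terms recombine and the $I$-part gets absorbed, and one shows by induction on $N$ that after finitely many such substitutions every negative power of $u$ has been eliminated.

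The step I expect to be the main obstacle is precisely the bookkeeping in this last induction: showing that the substitution process terminates and that the "error terms" $q_{t,i}$ that appear at each stage can be taken in $Q$ consistently, using hypotheses \ref{tipull2} and \ref{tipull3} to control the ideal $I$. Hypothesis \ref{tipull2} is what guarantees that when we divide an element of $I$ by $u$ (legitimate since $h$ of it lands in $(v)$) we stay within the span of the $t_i$'s and $Q$ — without it the division could produce genuinely new elements outside \rref{epulls}. Hypothesis \ref{tipull3} is what makes the recursion well-posed at every step (so that $t_i - q_{t,i}$ is always $u$-divisible in $\phi^{-1}(S)$). I would organize the termination as: each application strictly decreases the largest negative power of $u$ appearing, measured appropriately, so after $N$ rounds we land in $R' \cap R = R$ modulo lower-order terms, completing the descent.
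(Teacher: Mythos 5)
Your final plan (the ``resolution'') is correct and is essentially the paper's own argument: the $t_i$ are constructed exactly as you describe from surjectivity of $\overline h$ plus regularity of $u$, and generation is proved by descent on the power $n$ for which $u^n\xi$ is a polynomial in the generators $Q\cup\{t_i\}$, the key point being that after substituting $t_i=q_{t,i}+ut_{i+1}$ the residual polynomial in the $Q$-variables lies in $I$ (its image in $S/(v)$ equals that of $u^n\xi$, which vanishes for $n\ge 1$) and hence is $u$ times a polynomial in the generators by hypothesis \tref{tipull2}. The completeness detour should be dropped: no completeness is assumed or needed, and the paper runs your descent purely algebraically as a minimal-$n$ contradiction.
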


\begin{proof}
Denoting by $J$ the kernel of the induced homomorphism
$$\widetilde{h}:\phi^{-1}S\r S/(v),$$
we have, by \rref{tipull3}, an isomorphism $R/I\cong \phi^{-1}S/J$. Thus,
for $t\in \phi^{-1}S$, there is a unique $q(t)\in Q$ such that $t-q(t)\in J$ (extending our existing notation).
We may then put 
$$q_{t,i}=q(t_i),$$
thus proving existence. In fact, we now see that for every $t\in \phi^{-1}S$, there exists a unique
element $t^{\prime}\in \phi^{-1}S$ such that
$$ut^\prime=t-q(t).$$
Uniqueness follows from the fact that $Q$ is a set of representatives of $R/I$, and $u$ is a regular element.

To prove that the set \rref{epulls} generates the ring $\phi^{-1}(S)$, let $x\in \phi^{-1}(S)\subseteq u^{-1}R$. 
Then, by \rref{tipull1},  there exist an $n\in \N_0$ such that
$$u^nx=p(q,t_i)$$
where the right hand side denotes some polynomial in the various elements $t_i$, $t\in T$ and $q\in Q$. Our job
is to prove that $n=0$. Assume, then, $n>0$. Then 
$$v\mid p(\phi(q),\phi(t_i)),$$
and thus, 
$$p(\widetilde{h}(q),\widetilde{h}(t_i))=0\in S/(v).$$
Therefore, 
$$p(q,t_i)\in J,$$
and hence
$$p(q,q_{t,i})\in J.$$
But we also have $p(q,q_{t,i})\in R$, and $J\cap R=I$ (since $(\phi^{-1}S)/J\cong R/I\cong S/(v)$),
so 
$$p(q,q_{t,i})\in I=(t-q(t)\mid t\in T)\subseteq (ut_1\mid t\in T)\subseteq (u)\subseteq\phi^{-1}S.$$
(The first equality is by \rref{tipull2}.)
On the other hand, 
$$p(q,t_i)-p(q,q_{t,i})\in (t_i-q_{t,i})=(ut_{i+1})\subseteq (u)\subseteq \phi^{-1}S.$$
Therefore,
$$v=p(q,t_i)\in (u)\subseteq \phi^{-1}S$$
which, since $u$ is a regular element, contradicts the minimality of $n$.

\end{proof}

\vspace{3mm}

\noindent
{\bf Example 1} (\cite{hu}) {\bf :} For a prime $p$, to calculate $(MU_{\Z/p})_*$, we may let
\beg{erpull1}{R=MU_*[u_\alpha,b_i^\alpha\mid \alpha=1,\dots,p-1,i\in \N]
[u_\alpha^{-1}u_1\mid \alpha=2,\dots,p-1].}
We put $u=u_1$. Then $u\in R$ is a regular element, and we have
$$u^{-1}R=\Phi^{\Z/p}MU_*.$$
We put 
$$S=MU_*[[v]]/([p]v/v)$$
and define $h:R\r S$ by 
$$h(u_\alpha)=[\alpha]v,$$
$$h(b_i^\alpha)=\text{coeff}_{x^i}(x+_F [\alpha]v).$$
$$ h(u_{\alpha}^{-1}u) = \frac{[\alpha^{-1}]([\alpha]u)}{[\alpha]u} = \sum_{j \geq 0} 
a_{0, j+1}^{(\alpha^{-1})} ([\alpha]u)^j .$$
(Recall here that $\alpha^{-1}$ is the inverse of $\alpha$ in $(\Z/p)^{\times}$.)
By \cite{kriz}, $(MU_{\Z/p})_*$ is the pullback of the Tate diagram
$$\diagram
& u^{-1}R\dto^\phi\\
MU_*[[v]/[p]v\rto &v^{-1}S.
\enddiagram
$$
Now note that the image of the lower horizontal homomorphism is $S$, and its
kernel is
\beg{emuann}{MU_*\{\frac{[p]v}{v}\}.
}
Therefore, additively,
\beg{emuann1}{(MU_{\Z/p})_*=MU_*\{\frac{[p]v}{v}\}\oplus \phi^{-1}(S),}
and Theorem \ref{tpulll} can be used to find the second summand, giving the exact generators listed
in Theorem \ref{ttt}. Somewhat more work \cite{hu} is needed to determine the relations (i.e. the 
extension, and proving that there are no other relations except the ones we already know). 

However, 
even without knowing the relations, in some sense, we can consider \rref{emuann1} to be an
explicit calculation of
$(MU_{\Z/p})_*$, since we have explicit generators of the second summand inside the explicitly given
ring $u^{-1}R$.

\vspace{5mm}
\noindent
{\bf Example 2:} The ring \rref{efpull1} can be computed by the same method. Let us use 
Diagram \rref{efppull}. We have 
\beg{efz3pull1}{\Phi^{\Z/3}MU_{*}=MU_{*}[u_+,u_-,u_+^{-1},u_-^{-1},b_i^+,b_i^-\mid i\in\N]}
where in comparison with \rref{erpull1}, we put $u_+=u_1$, $u_-=u_2$, $b_i^+=b_i^1$, $b_i^-=b_i^2$.
The action reverses the $+$ and $-$. We put
$$\widetilde{S}=(F(E\Z/3_+,MU)_*)^{\Z/2}=(MU_*[[u_+]]/[3]u_+)^{\Z/2}=MU_*[[v]]/\{3\}v,$$
$$S=MU_*[[v]]/(\{3\}v/v).$$
Here $v=u_+u_-=u_+([2]u_+)$, and $\{3\}v$ is the same series as $\{3\}u_\gamma$, with $u_\gamma$
replaced by $v$ and $u_\alpha$ by $0$.

Next, we need to compute the $\Z/2$-fixed points of \rref{efz3pull1}. To this end, recall the computation
of $A=\Z[x_1,y_1,\dots,x_n,y_n]^{\Z/2}$ where the generator of $\Z/2$ switches $x_i$ with $y_i$. 
The key point here is that if we denote by $\sigma_{i,\epsilon}$ the $\epsilon$'th elementary symmetric
polynomial in $x_i, y_i$, $\epsilon=0,1$,
then $A$ is a a free module over $B=\Z[\sigma_{1,0},\sigma_{1,1},\dots \sigma_{n,0},\sigma_{n,1}]$
with basis $x_I=x_{i_1}\dots x_{i_k}$ for $I=\{i_1<\dots <i_k\}\subseteq \{1,\dots,n\}$ (note that it
suffices to prove the case $n=1$, which is easily checked). Now of course $B$ is fixed under
the action, and the action of the generator $\rho$ of $\Z/2$ on $x_i$ is by 
$$x_i\mapsto \sigma_{i,0}-x_i.$$
This means that $A$ has an increasing filteration by $B[\Z/2]$-modules, where the filtration degree of $x_I$ is
the number of elements $|I|$ where $I$ is as above.
Thus, for $|I|$ even, the element
\beg{exiyi}{(1+\rho)(x_I)=x_I+y_I
}
is written in the basis as $2x_I$  plus elements of lower filtration degree, while for $|I|$ odd, $(1+\rho)x_I$ is
entirely in lower filtration degree. In fact, verification shows that for $|I|$ odd,
$$2(1+\rho)x_I=-\sum_{i\in I} \sigma_{i,0}(1+\rho)x_{I\smallsetminus\{i\}}.$$
Thus, $A$ has a sub-$B[\Z/2]$-module generated on $2x_I+LOT$ on which $\rho$ acts by minus. The quotient
is an extended $B[\Z/2]$-module generated by $x_I$. We conclude that the $B$-module $A^{\Z/2}$ is generated
by the elements \rref{exiyi}. 

\vspace{3mm}
We conclude that the ring $(\Phi^{\Z/3}MU_*)^{\Z/2}$ is generated by $u=u_+u_-$, $u^{-1}$,
$\beta_i=b_i^+b_i^-$, and elements of the form 
\beg{eoddgen}{(1+\rho)(x_{i_1}\dots x_{i_k})
}
where $x_{i_j}$ are different elements of the form $u_+$, $b_i^+$.

Now let $R$ be the subring generated by $u$, $\beta_i$ and the elements \rref{eoddgen}. Then we are
again in the situation of Theorem \ref{tpulll}: we have $q(\beta_0)=1$, otherwise
the value of $q$ on the specified generators is $0$. We have, again, $S/(v)=MU_*/3$. Thus,
again, the assumptions of Theorem \ref{tpulll} are satisfied. Again, the
kernel of the projection $\widetilde{S}\r S$ is 
\beg{emumumu}{MU_*\{\frac{\{3\}v}{v}\},}
so $(MU_{\Z/3})_*^{\Z/2}$ is additively a sum of \rref{emumumu} and a factor ring whose generators 
are computed by Theorem \ref{tpulll}.

\vspace{5mm}
\noindent
{\bf Comment:} There is a more geometric and arguably more precise, but less explicit
description of the construction of Theorem \ref{tpulll}. Assume that $(v)\subset S$ is a prime ideal. 
Thus, we obtain a morphism of affine schemes
\beg{especm}{Spec(S)\r Spec(R)}
where the image of the closed point is $I$. Blow up $I$. By universality, \rref{especm} lifts to the blow-up.
The image of the lift is an affine open subset. Blow up the image of $(v)$ under the lift, and repeat the procedure.
The procedure could stop after finitely many steps if the image of $(v)$ after a series of blow-ups
is a principal ideal, which would correspond to all the generators $t_i$ being in $Q$
(which would give $t_{i+1}=0$). However, the experience \cite{strickland, hu}
has been that in the case of equivariant
cobordism rings, typically the procedure does not stop after finitely many steps (i.e. the ideal $(v)$ is not
``divisorial"). Note, of course, that in these examples, we are dealing with coherent, but not Noetherian rings.

\vspace{10mm}

\end{document}